\RequirePackage{fix-cm}
\RequirePackage{amsmath}
\RequirePackage[hyphens]{url}
\documentclass[5p,times]{elsarticle}
\journal{Operations Research Letters}
\usepackage{amssymb}
\usepackage{placeins}
\usepackage{xcolor}
\usepackage{mathtools}
\usepackage{mathrsfs}
\usepackage{amsfonts}
\usepackage{booktabs}
\usepackage{amsthm}

\usepackage[hidelinks,hypertexnames=false,colorlinks=true,breaklinks=true,bookmarks=true,urlcolor=blue,citecolor=blue,linkcolor=blue,bookmarksopen=false,draft=false]{hyperref}

\theoremstyle{plain}
\newtheorem{theorem}{Theorem}
\newtheorem{lemma}[theorem]{Lemma}
\newtheorem{corollary}[theorem]{Corollary}

\newtheorem{definition}[theorem]{Definition}
\newtheorem{example}[theorem]{Example}
\theoremstyle{remark}
\newtheorem{remark}[theorem]{Remark}

\providecommand{\proofname}{Proof}
\renewenvironment{proof}[1][\proofname]{\par\noindent\textit{#1.}\ \ignorespaces}{\par}
\renewcommand{\qed}{\hfill$\scriptstyle\square$}

\allowdisplaybreaks

\vbadness=\maxdimen

\makeatletter
\renewcommand*{\top}{%
  {\mathpalette\@transpose{}}%
}
\newcommand*{\@transpose}[2]{%
  \scriptsize
  \raisebox{\depth}{$\m@th#1\mathsf{T}$}%
}
\makeatother

\DeclareMathOperator{\Diag}{Diag}
\DeclareMathOperator{\diag}{diag}
\DeclareMathOperator{\ldet}{ldet}

\DeclareMathOperator{\Tr}{Tr}
\DeclareMathOperator{\rank}{rank}

\renewcommand{\circeq}{\mathrel{\ooalign{\hss$\circ$\hss\cr$\equiv$}}}

\newcommand{\znaturalthing}{\mathfrak{z}_{\mbox{\protect\tiny $\mathcal{N}$}}}

\newcommand{\zdoptthing}{{\mathfrak{z}}_{\text{\normalfont\protect\tiny  D-Opt}}}

\newcommand{\zmespthing}{z_{\mbox{\normalfont\protect\tiny MESP}}}

\newcommand{\znlpthingmesp}{{z}_{\text{\normalfont\protect\tiny NLP-Id$(\gamma)$}}}

\newcommand{\DOPT}{
\mbox{\rm D-Opt}}
\newcommand{\MESP}{
\mbox{\rm MESP}}

\newcommand{\NLPIdsym}{
\mbox{\rm{NLP-Id}}}

\newcommand{\NLPId}{\hyperlink{NLPIdtarget}{\NLPIdsym}}

\begin{document}

\begin{frontmatter}

\title{New insights into the NLP-Id bound for maximum-entropy sampling}

\author[iowa]{Kurt Anstreicher}
\ead{kurt-anstreicher@uiowa.edu}

\author[ufrj]{Marcia Fampa}
\ead{fampa@cos.ufrj.br}

\author[umich]{Jon Lee}
\ead{jonxlee@umich.edu}

\author[cuhk]{Yongchun Li}
\ead{yongchunli@cuhk.edu.cn}

\author[umich]{Gabriel Ponte}
\ead{gabponte@umich.edu}

\address[iowa]{University of Iowa}
\address[ufrj]{Federal University of Rio de Janeiro}
\address[umich]{University of Michigan}
\address[cuhk]{The Chinese University of Hong Kong, Shenzhen}

\begin{abstract}
We establish new properties of the NLP-Id upper bound for the maxi\-mum-entropy sampling problem (MESP).
In particular, we give a detailed look at the concavity of its objective function as a function of the scaling parameter employed for NLP bounds for MESP.
This leads to more relaxed choices for the scaling parameter for NLP-Id and
even improved upper bounds for MESP.
\end{abstract}

\end{frontmatter}

\clubpenalty = 0
\widowpenalty = 0
\displaywidowpenalty = 0
\sloppy

\section{Introduction}\label{sec:int}

The so-called ``NLP bound'' for the maximum-entropy sampling problem (MESP) was introduced over thirty years ago (see \cite{AFLW_IPCO,AFLW_Using}). It was the first convex-relaxation based upper bound for MESP. The concavity of its objective function and the quality of the bound depend on the choice of certain parameters. 
The simplest choice, originally suggested in \cite{AFLW_Using} is the 
so-called ``Identity strategy'', yielding what is known as the NLP-Id bound. 
This manifestation of the NLP bound has received renewed attention recently.
\cite{ponte2026relationshipmesp01dopt} established that the NLP-Id bound for any instance of MESP is equivalent to the so-called ``natural bound'' on a closely related instance of the 0/1 D-optimality (0/1 D-Opt) problem. Leveraging this, in what follows, we gain a new interpretation of the ``complementary NLP-Id'' bound. Further, by carefully analyzing the Hessian of the objective function, we are able to gain a fuller understanding of the concavity of the objective function
of the NLP-Id relaxation as a function of the scaling parameter.

Other convex-programming based bounds for MESP also involve a positive scaling parameter, notably the ``linx bound'' and the ``BQP bound''. The quality of these bounds depends critically on its choice, as is also true for the NLP bounds. Furthermore, the associated relaxations, ``linx'' and ``BQP'', are convex programs for all positive scaling parameters, and the 
bounds themselves are convex in the logarithm of the scaling parameter (see \cite{Mixing}).
In contrast, the family of NLP bounds 
does not have these nice properties, and 
so a strong motivation of our work is 
to understand the behavior of NLP relaxations as a function of the scaling parameter.

\smallskip
\noindent{\bf Organization and contributions.} 
In \S\ref{sec:equivalence},
we carefully define MESP and the NLP-Id bound, and 0/1 D-Opt and the natural bound, 
and extend some connections established in \cite{ponte2026relationshipmesp01dopt}. In \S\ref{sec:results}, we use these connections to establish a new interval of concavity of the NLP-Id objective function, for its scaling parameter. We also interpret the new interval of concavity in terms of the ``complementary NLP-Id bound'', showing for the first time that there is a connection between the scaling and complementation operations.
The analysis in \S\ref{sec:results} does not consider the Hessian of the objective function of NLP-Id, but 
in \S\ref{sec:hessian}, we analyze the Hessian to directly verify the intervals of concavity from \S\ref{sec:results}.
Then, in \S\ref{sec:forbidden}, we use the Hessian to give a checkable sufficient condition for non-concavity of the objective function (at some given $\gamma$),
and we demonstrate \emph{numerically} that 
the objective function can be concave at values outside of the range established in 
\S\ref{sec:results}. Further, for such values of $\gamma$, we can have improved upper bounds on the optimal value of MESP.

\smallskip
\noindent{\bf Notation.}
We denote any all-zero vector or matrix by $0$.
We denote any all-one vector
by $\mathbf{e}$,
 and the identity matrix of order $n$ by $I_n$\,.
 We let $\mathbb{S}^n$  (resp., $\mathbb{S}^n_+$~, $\mathbb{S}^n_{++}$)
 denote the set of symmetric (resp., positive-semidefinite, positive-definite) matrices of order $n$.
 We use the usual symbols $\prec,\preceq,\succ,\succeq$ in the context of the L\"owner ordering.
 We let $\Diag(x)$ denote the $n\times n$ diagonal matrix with diagonal elements given by the components of $x\in \mathbb{R}^n$, we let $\diag(X)$ denote the $n$-vector with elements given by the diagonal elements of $X\in\mathbb{R}^{n\times n}$,
and we let $\Diag(X):=\Diag(\diag(X))$, for $X\in\mathbb{R}^{n\times n}$.
  For matrix $X$,  $X_{S,T}$\, is the submatrix  with row (column) indices $S$ ($T$);
 we write $i$ for $\{i\}$.
We let
$\ldet$ denote the natural logarithm of the determinant. 
For matrices of the same shape,
$X_1\circ X_2$ is the Hadamard (i.e., element-wise) product, and $X^{\scriptscriptstyle (2)}:=X\circ X$.
We let $N:=\{1,2,\ldots,n\}$.  
For $X\in\mathbb{S}^n$ and $i\in N$, we let $\lambda_i(X)$ denote the $i$-th greatest eigenvalue of $X$,
and we let  $\lambda(X):=(\lambda_1(X),\lambda_2(X),\ldots,\allowbreak \lambda_n(X))^\top$.
For extra clarity, we define $\lambda_{\max}(X):=\lambda_1(X)$
and $\lambda_{\min}(X):=\lambda_n(X)$.
Specifically for $C \in \mathbb{S}^n_{+}$ (which plays a special role for us), we define $\lambda_{\max} := \lambda_{\max}(C)$,  $\lambda_{\min} := \lambda_{\min}(C)$, and  $\mu_{\max}$ ($\mu_{\min}$) as the multiplicity of $\lambda_{\max}$ ($\lambda_{\min}$) as an eigenvalue of $C$. 


\section{MESP, 0/1 D-Opt, and some connections}\label{sec:equivalence}

\noindent{\bf MESP.} 
Let $C$ be a symmetric positive-semidefinite matrix with rows/columns
indexed from $N$.
For $0< s < n$,
we define the \emph{maximum-entropy sampling problem}
\begin{align*}
\tag{MESP$(C,s)$}\label{MESP}
&\zmespthing(C,s):= \\
&\quad \max \left\{ \ldet\left( C_{S(x),S(x)}\right)\right.\\
&\qquad\left.:~ \mathbf{e}^\top x =s,~ x\in\{0,1\}^n\right\},
\end{align*}
where 
 $S(x)$ denotes the support of $x$. 
 For feasibility, we assume that $r:=\rank(C)\geq s$. 
 \ref{MESP} was introduced by \cite{SW}; also see \cite{FLbook} and the many references therein. Briefly, in the Gaussian case,  $\ldet (C_{S,S})$ is
 proportional to the ``differential entropy'' (see \cite{Shannon}) of a vector of random variables 
 having covariance matrix $C_{S,S}$\,.  So \ref{MESP} seeks to
 find the ``most informative'' $s$-subvector from an $n$-vector following a joint 
 Gaussian distribution. \ref{MESP} finds application in many areas, e.g. environmental monitoring (see \cite[Chap. 4]{FLbook}). 

 \medskip
\noindent{\bf 0/1 D-Opt.}  
 Consider the 0/1 $\DOPT$ problem formulated as 
\begin{align*}\label{DOPT}\tag{D-Opt$(A,B,s)$}
\textstyle
&\zdoptthing(A,B,s):=\\
&\quad \max\!\left\{
\ldet\! \left( A^\top \Diag(x) A \!+\! B^\top B\right)
\right.\\
&\left.\qquad:~ \mathbf{e}^\top x = s,\, x\!\in\!\{0,1\}^n
\right\},
\end{align*}
where $A \in \mathbb{R}^{n\times m}$  (with no all-zero rows), $B \in \mathbb{R}^{q\times m}$, $q\geq 1$  (with all-zero rows allowed), 
{\scriptsize$\begin{pmatrix}A\\ B \end{pmatrix}$} has full column rank $m$, and $n>s\geq m-\rank(B)$. In the world of statistics, rows of {\scriptsize$\begin{pmatrix}A\\ B \end{pmatrix}$}
are considered to be ``design points'' encoding levels 
of $m$ factors. We are required to choose all rows of $B$, and we wish to choose, additionally, $s$ distinct rows from $A$. 
Here, $B^\top B$ is known as the ``existing Fisher-Information matrix''; if $B$ has full column rank, then 
\ref{DOPT} is called a \emph{0/1 D-optimal data fusion instance.}
The goal of \ref{DOPT} is to minimize the generalized variance for parameter estimates in a least-squares model, based on the chosen design points. 
The assumption that {\scriptsize$\begin{pmatrix}A\\ B \end{pmatrix}$} has full column rank
implies that every feasible solution of \ref{DOPT} yields an ``identifiable'' regression model
(i.e., the associated least-squares problem has a unique solution). 
The 0/1 $\DOPT$ problem and its variants are truly fundamental in the design of experiments; see \cite{puk} and \cite{PonteFampaLeeMPB}.

Following \cite{ponte2026relationshipmesp01dopt}, for
combinatorial-optimization problems
$P$ and $Q$
for which the feasible sets
are identical point-sets in $\{0,1\}^n$
and a constant $K\in\mathbb{R}$,
such that every feasible solution has 
objective value in $Q$ exactly $K$ more than in $P$, we write $Q\equiv P+ K$.
If instead, the feasible points of $Q$ are precisely the feasible points of $P$ under the complementation involution
($\hat{x}\mapsto\mathbf{e}-\hat{x}$), and for every feasible solution $\hat{x}$ of $P$, the objective value of $Q$ at $\mathbf{e}-\hat{x}$ is exactly $K$ more than the objective value of $P$ at $\hat{x}$, we write $Q \circeq P + K$.

\cite{AFLW_Using} first pointed out 
a scaling principle for $\MESP$:
that is, for $\gamma>0$, we have  $\MESP(C,s)\equiv\MESP(\gamma C,s) -s \log \gamma$,
and so $\zmespthing(C,s)= \zmespthing(\gamma C,s) -s \log \gamma$. We note that any upper bound for $\MESP(\allowbreak \gamma C,s)$ minus $s \log \gamma$ is also an upper bound for $\MESP(C,s)$, which we call a \emph{scaled upper bound}.
Also, \cite{AFLW_Using} pointed out  
a complementing principle for $\MESP$:
if $C$ is positive definite, 
then $\MESP(C,s)\circeq\MESP(C^{\scriptscriptstyle -1},n-s) + \ldet (C)$, here employing the complementation involution,
and so $\zmespthing(C,s)\allowbreak =\allowbreak \zmespthing(C^{\scriptscriptstyle -1},n-s) + \ldet (C)$.
In what follows, we say that 
$\MESP(C,s)$ and $\MESP(C^{\scriptscriptstyle -1},n-s)$ are \emph{complementary $\MESP$ instances}.  We note that any upper bound for $\MESP(C^{\scriptscriptstyle -1},n-s)$ plus $\ldet (C)$ is also an upper bound for $\MESP(C,s)$, which we call a \emph{complementary upper bound}.

We begin by introducing parametrically scaled versions of two maps described in \cite{ponte2026relationshipmesp01dopt}.

\begin{definition} [parametrically scaled maps $\mathcal{D}_\gamma$ and $\mathcal{F}_\gamma$]\label{def:newmapping}~ 
\begin{itemize}
\item 
Consider an arbitrary $\MESP$ instance $\MESP(C_{n\times n}\,,s)$ 
and a real Schur decomposition $\Phi\Lambda\Phi^\top$ of $C$. Let $\gamma\in(0,1/\lambda_{\max}]$. Define $\mathcal{D}_\gamma(\MESP(C,s);\Phi,\gamma):=\DOPT(A,B,n-s)$,
where
$A_{n\times n}:=\Phi(I_n-\gamma\Lambda)^{\scriptscriptstyle 1/2}$, and $B_{n\times n}:=(\gamma\Lambda)^{\scriptscriptstyle 1/2}$. 
Note that $\rank(B)=\rank(C)$, and $\rank(A)\in\{n-\mu_{\max}\,,n\}$\,. Additionally, $A^\top A+B^\top B=I_n$ and $AA^\top=I_n - \gamma C$.

\item 
Consider a $\MESP$ instance $\MESP(C_{n\times n}\,,s)$, with $C\in\mathbb{S}^n_{++}$\,. Let $\gamma\in[1/\lambda_{\min}\,,\allowbreak +\infty)$ and 
$A\in\mathbb{R}^{n\times m}$ be such that $AA^\top:=\gamma C-I_n$\,. 
Define $\mathcal{F}_{\gamma}(\MESP(C,s);\allowbreak A,\gamma):=\DOPT(A,I_m\,,s)$, which is a 0/1 D-optimal data-fusion instance.  Note that $m\geq \rank(A)\in\{n-\mu_{\min}\,,n\}$\,.
\end{itemize}
\end{definition}
These maps take instances of 
\ref{MESP} to instances of 0/1 D-Opt. The map 
$\mathcal{D}_\gamma$ complements the variables, while the map $\mathcal{F}_\gamma$ does not. For $\gamma=
1/\lambda_{\max}$
(resp., $1/\lambda_{\min}$), the map $\mathcal{D}_\gamma$
(resp., $\mathcal{F}_\gamma$)
becomes the map $\mathcal{D}$ (resp., $\mathcal{F}$) from \cite{ponte2026relationshipmesp01dopt}.

\begin{theorem}\label{thm:equivDF}
\phantom{.}
\begin{itemize}
\item {\rm(adapted from 
\cite[Thm. 10]{ponte2026relationshipmesp01dopt})}
For an arbitrary $\MESP$ instance $\MESP(C,s)$, $\gamma\in(0,1/\lambda_{\max}]$, and 
a real Schur decomposition $\Phi\Lambda\Phi^\top$ of $C$, we have 
$\MESP(C,s)\circeq\mathcal{D}_{\gamma}(\MESP(C,s);\Phi,\gamma)-s\log \gamma$.
\item {\rm(adapted from 
\cite[Thm. 1]{li2022d})} For a positive-definite $\MESP$ instance $\MESP(C,s)$,  $\gamma\in[1/\lambda_{\min}\,,+\infty)$, and an 
$A\in\mathbb{R}^{n\times m}$ such that $AA^\top:=\gamma C-I_n$\,, we have 
$\MESP(C,s)\equiv\mathcal{F}_{\gamma}(\MESP(C,s);\allowbreak A,\gamma)-s\log \gamma$.
\end{itemize}
\end{theorem}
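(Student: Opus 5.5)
Both parts are exact determinant identities that hold pointwise in $x$. Feasibility (the cardinality constraint) matches by construction, so the plan is to compare objective values.

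\textbf{First bullet.} Fix $\hat x\in\{0,1\}^n$ with $\mathbf{e}^\top\hat x=s$, let $S:=S(\hat x)$, and let $T:=N\setminus S$ be the support of $\mathbf{e}-\hat x$, so $|T|=n-s$. The $\mathcal{D}_\gamma$ objective at $\mathbf{e}-\hat x$ is
\[
\ldet\bigl(A^\top\Diag(\mathbf{e}-\hat x)A+B^\top B\bigr)=\ldet\bigl(I_n-A^\top\Diag(\hat x)A\bigr),
\]
using $A^\top A+B^\top B=I_n$. Writing $A^\top\Diag(\hat x)A=A_{S,\cdot}^\top A_{S,\cdot}$, Sylvester's determinant identity gives
\[
\det\bigl(I_n-A_{S,\cdot}^\top A_{S,\cdot}\bigr)=\det\bigl(I_s-A_{S,\cdot}A_{S,\cdot}^\top\bigr)=\det\bigl((AA^\top)_{S,S}\,\text{replaced by } I_s-(I_n-\gamma C)_{S,S}\bigr)=\det(\gamma C_{S,S}).
\]
Concretely, $I_s-(AA^\top)_{S,S}=I_s-(I_n-\gamma C)_{S,S}=\gamma C_{S,S}$, by $AA^\top=I_n-\gamma C$. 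Hence the D-Opt value at $\mathbf{e}-\hat x$ equals $\ldet(C_{S,S})+s\log\gamma$. This is exactly the relation $\circeq$ with $K=-s\log\gamma$ after rearranging.

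I also need to check the side conditions of Definition~\ref{def:newmapping}. First, $AA^\top=\Phi(I-\gamma\Lambda)\Phi^\top=I-\gamma C$ and $A^\top A+B^\top B=(I-\gamma\Lambda)+\gamma\Lambda=I$; both require $\gamma\le 1/\lambda_{\max}$ so that the square root is real. Second, the stacked matrix has full column rank because $A^\top A+B^\top B=I$. Third, the condition $n-s\ge n-\rank(B)=n-r$ holds since $r\ge s$. One point needs care: $A$ may have zero rows, since $A_{i,\cdot}$ has squared norm $1-\gamma C_{ii}$. The identity itself is unaffected, so I would only note that this is inherited from the cited result.

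\textbf{Second bullet.} With $S:=S(x)$ and $|S|=s$, the $\mathcal{F}_\gamma$ objective is $\ldet\bigl(I_m+A_{S,\cdot}^\top A_{S,\cdot}\bigr)$. By Sylvester again this equals
\[
\ldet\bigl(I_s+(AA^\top)_{S,S}\bigr)=\ldet\bigl(I_s+\gamma C_{S,S}-I_s\bigr)=\ldet(C_{S,S})+s\log\gamma.
\]
This gives $\equiv$ with constant $-s\log\gamma$. Here $\gamma\ge 1/\lambda_{\min}$ ensures $\gamma C-I\succeq0$, so such an $A$ exists (for example, via a Cholesky or Schur square root). The requirement $s\ge m-\rank(I_m)=0$ is trivial.

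The only real subtlety is bookkeeping: making sure the complementation maps $s$ to $n-s$ correctly, and that the sign of the constant matches the paper's convention for $\circeq$ and $\equiv$. No step is a genuine obstacle; the Sylvester identity does all the work.
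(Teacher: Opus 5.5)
Your proposal is correct and follows essentially the paper's route. The paper does not prove this theorem itself but defers to the cited works; the same chain of identities ($A^\top A+B^\top B=I_n$, $AA^\top=I_n-\gamma C$ resp.\ $\gamma C-I_n$, and Sylvester's determinant identity) is what it writes out in Lemmas~\ref{lem:nlp-natural-equiv-D-every-x} and~\ref{lem:nlp-natural-equiv-F-every-x}, and at a binary $\hat x$ with support $S$ these reduce to your computation $\ldet(\gamma C_{S,S})=\ldet(C_{S,S})+s\log\gamma$. Your remark about all-zero rows of $A$ is a fair point the paper does not address: it can happen only at the endpoint $\gamma=1/\lambda_{\max}$ (resp.\ $\gamma=1/\lambda_{\min}$) when some $C_{ii}$ equals that extreme eigenvalue, the same caveat applies to your second bullet, and in neither case does it affect the objective identity.
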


Next, we define a mild generalization of the NLP-Id upper bound for \ref{MESP} proposed in \cite{AFLW_Using}.

\medskip

\noindent \textbf{NLP-Id bound.} For $\gamma>0$, the \emph{(scaled)  NLP-Id bound} for \ref{MESP} is 
\begin{align}\label{nlpid_bound}\tag{NLP-Id$(\gamma)$}
&\hypertarget{znlptargetmesp}{\znlpthingmesp}(C,s) := \\[-4pt]
&\quad\max \left\{\vphantom{\Big)}\ldet\big(I_n +  \Diag(x)^{\scriptscriptstyle 1/2}(\gamma C \!-\!I_n)\Diag(x)^{\scriptscriptstyle 1/2} \big)\right. \nonumber\\[-4pt] 
&\qquad \left. - s\log \gamma ~:~
  \mathbf{e}^\top x\!=\!s,~ 
x\in[0,1]^n\vphantom{\Big)}\right\}.\nonumber 
\end{align}
The ordinary NLP-Id bound of \cite{AFLW_Using} has $\gamma:=1/\lambda_{\max}$\,.

The natural bound for \ref{DOPT} was proposed in \cite{Welch}; also see \cite{PonteFampaLeeMPB}.

\medskip

\noindent\textbf{Natural bound.} 
The \emph{natural bound} for \ref{DOPT} is 
\begin{minipage}{\linewidth}\begin{align*}\label{natural_bound}\tag{\mbox{$\mathcal{N}$}} 
&\textstyle \hypertarget{znaturaltarget} 
\textstyle \hypertarget{znaturaltarget}{\znaturalthing}(A,B,s):= \\[-4pt]
&\quad \max\! \left\{ \ldet \left(A^\top \Diag(x) A + B^\top B \right) 
\right.\\[-4pt]
&\qquad\left.
~:~ \mathbf{e}^\top x\!=\!s,~ 
x\in[0,1]^n
\right\}.
\end{align*}
\end{minipage}


Generalizing from \cite{ponte2026relationshipmesp01dopt}, we have the following definitions of natural bounds induced by our maps. These bounds are upper bounds for $\zmespthing$\,, due to Theorem~\ref{thm:equivDF}.

\begin{definition}
[$\mathcal{D}_\gamma$-induced and $\mathcal{F}_\gamma$-induced natural bounds for $\MESP(C,s)$]
\phantom{.}
\begin{itemize}
\item Considering a real Schur decomposition $\Phi\Lambda\Phi^\top$ of $C$ and $\gamma\in(0,1/\lambda_{\max}]$, the $\mathcal{D}_\gamma$\emph{-induced natural bound} for $\MESP(C,s)$ is defined as  $\znaturalthing(A,B,n-s) - s\log \gamma$, where $\DOPT(A,B,n-s):=\mathcal{D}_\gamma(\MESP(C,s);\Phi,\gamma)$.
\item Considering $A\in\mathbb{R}^{n\times m}$  such that $AA^\top:=\gamma C-I_n$\,, where $C$ is positive-definite,   and $\gamma\in[1/\lambda_{\min}\,,+\infty)$, the $\mathcal{F}_\gamma$-\emph{induced natural bound} for $\MESP(C,s)$ is defined as  $\znaturalthing(A,I_m\,,s) - s\log \gamma$, where $\DOPT(A,I_m\,,s):=\mathcal{F}_\gamma(\MESP(C,s);A,\gamma)$.
\end{itemize}
\end{definition}


\section{Another interval of $\gamma$ for concavity of \ref{nlpid_bound}}\label{sec:results}

The following result was established in \cite{ponte2026relationshipmesp01dopt} for $\gamma:=1/\lambda_{\max}$\,. We extend the result for all $\gamma\in(0,1/\lambda_{\max}]$. Although the proof from \cite{ponte2026relationshipmesp01dopt} is directly extended, we include it here for completeness.

\begin{lemma}[\protect{see \cite[Lem.~27]{ponte2026relationshipmesp01dopt}}]\label{lem:nlp-natural-equiv-D-every-x}
    Let  $C \!\in \!\mathbb{S}^n_{+}$   with greatest eigenvalue $\lambda_{\max}$  and a real Schur decomposition $\Phi\Lambda\Phi^\top$. Let $\gamma\!\in\!(0,1/\lambda_{\max}]$,  $A:=\Phi(I_n-\gamma\Lambda)^{\scriptscriptstyle 1/2}$ and $B:=(\gamma\Lambda)^{\scriptscriptstyle 1/2}$. Then, we have
\begin{align*}
\ldet\!\left( I_n + \Diag(\hat{x})^{\scriptscriptstyle 1/2}(\gamma C\!-\!I_n)\Diag(\hat{x})^{\scriptscriptstyle 1/2} \right) \\= \ldet\!\left( {A}^\top \Diag(\mathbf{e}\!-\!\hat{x}) {A} \!+\! {B}^\top {B}\right),
\end{align*}
for all $0\leq \hat{x}\leq \mathbf{e}$ such that  the $\ldet$ functions above have a finite value. 
\end{lemma}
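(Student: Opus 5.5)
The plan is to prove the identity by direct matrix algebra. The tools are the two identities recorded in Definition~\ref{def:newmapping}, namely $A^\top A+B^\top B=I_n$ and $AA^\top=I_n-\gamma C$, together with Sylvester's determinant identity $\det(I_n+UV)=\det(I_n+VU)$ for square $U,V$. Fix $\hat x\in[0,1]^n$ and write $D:=\Diag(\hat x)$, so that $D^{1/2}$ is a well-defined diagonal matrix with nonnegative entries.

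\textbf{Step 1: rewrite the right-hand argument.} Since $\Diag(\mathbf{e}-\hat x)=I_n-D$, the identity $A^\top A+B^\top B=I_n$ gives
\begin{align*}
A^\top \Diag(\mathbf{e}-\hat{x})A+B^\top B &= A^\top A+B^\top B-A^\top D A \\
&= I_n - A^\top D^{1/2}D^{1/2}A.
\end{align*}

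\textbf{Step 2: swap the factors.} Apply Sylvester's identity with $U:=-A^\top D^{1/2}$ and $V:=D^{1/2}A$. This gives
\[
\det\!\left(I_n-A^\top D^{1/2}D^{1/2}A\right)=\det\!\left(I_n-D^{1/2}AA^\top D^{1/2}\right).
\]

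\textbf{Step 3: substitute $AA^\top$.} Using $AA^\top=I_n-\gamma C$, the right-hand side becomes
\[
\det\!\left(I_n-D^{1/2}(I_n-\gamma C)D^{1/2}\right)=\det\!\left(I_n+D^{1/2}(\gamma C-I_n)D^{1/2}\right).
\]
Taking logarithms yields the claimed identity.

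Two routine checks remain. First, the identities $A^\top A+B^\top B=I_n$ and $AA^\top=I_n-\gamma C$ should be confirmed from the definitions of $A$ and $B$. This uses $\Phi^\top\Phi=\Phi\Phi^\top=I_n$ and the requirement that $\gamma\le 1/\lambda_{\max}$, which guarantees that $I_n-\gamma\Lambda\succeq 0$ and hence that its square root is real. Second, the restriction to $\hat x$ at which the $\ldet$ values are finite matters only for taking logarithms. The determinant equality holds for every $\hat x\in[0,1]^n$, so both sides are finite at exactly the same points.

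I expect no real obstacle here. The only delicate point is the correct application of Sylvester's identity to the non-symmetric factors, and that application is the heart of the argument.
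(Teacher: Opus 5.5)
Your proof is correct and follows the paper's argument essentially step for step. The paper runs the same chain of equalities in the opposite direction: it substitutes $AA^\top = I_n-\gamma C$, swaps factors via the Sylvester determinant identity (used without being named), and finally uses $A^\top A+B^\top B=I_n$ with $\Diag(\mathbf{e}-\hat{x})=I_n-\Diag(\hat{x})$.
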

\begin{proof}
   We have  
    \begin{align*}
        &\ldet\left( I_n + \Diag(\hat{x})^{\scriptscriptstyle 1/2}(\gamma C-I_n)\Diag(\hat{x})^{\scriptscriptstyle 1/2} \right) \\
        &\quad  = \ldet\left( I_n - \Diag(\hat{x})^{\scriptscriptstyle 1/2}{A}{A}^{\top}\Diag(\hat{x})^{\scriptscriptstyle 1/2}\right)\\
        &\quad =\ldet\left( I_n - {A}^\top\Diag(\hat{x}){A}\right)\\
        &\quad = \ldet\left( {A}^\top {A} +B^\top B   - {A}^\top \Diag(\hat{x}) {A}\right)\\
    &\quad =\ldet\left( {A}^\top \Diag(\mathbf{e}-\hat{x}) {A} + {B}^\top {B}\right),
    \end{align*}
where  we use that ${A}^\top {A} +B^\top B =I_n$\,.
\qed \end{proof}

Similarly, we have the following result.

\begin{lemma}\label{lem:nlp-natural-equiv-F-every-x}
    Let  $C \!\in \!\mathbb{S}^n_{++}$  with  least eigenvalue $\lambda_{\min}$\,. Let $\gamma\in[1/\lambda_{\min}\,,+\infty)$, and let $A\in\mathbb{R}^{n\times m}$ be such that $AA^\top:=\gamma C-I_n$\,. Then, we have 
\begin{align*}
\ldet\!\left( I_n + \Diag(\hat{x})^{\scriptscriptstyle 1/2}(\gamma C\!-\!I_n)\Diag(\hat{x})^{\scriptscriptstyle 1/2} \right) \\= \ldet\!\left( {A}^\top \Diag(\hat{x}) {A} \!+\! I_m\right),
\end{align*}
for all $0\leq \hat{x}\leq \mathbf{e}$ such that  the $\ldet$ functions above have a finite value. 
\end{lemma}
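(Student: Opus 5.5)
The plan mirrors the proof of Lemma~\ref{lem:nlp-natural-equiv-D-every-x}, with one change: here we do not complement the variables, and we apply Sylvester's determinant identity directly to the factorization $AA^\top=\gamma C-I_n$. For $\gamma\ge 1/\lambda_{\min}$ the matrix $\gamma C-I_n$ is positive semidefinite, so such an $A\in\mathbb{R}^{n\times m}$ exists (for instance a Cholesky or symmetric square-root factor, possibly with extra zero columns). Only the identity $AA^\top=\gamma C-I_n$ will be used, so the argument works for any such $A$.

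The first step substitutes the factorization into the left-hand side. With $X:=\Diag(\hat{x})^{\scriptscriptstyle 1/2}$ this gives
\begin{align*}
I_n + X(\gamma C-I_n)X = I_n + (XA)(XA)^\top .
\end{align*}
The second step applies the identity $\det(I_n+MN)=\det(I_m+NM)$ with $M:=XA\in\mathbb{R}^{n\times m}$ and $N:=(XA)^\top$. This yields
\begin{align*}
\det\!\left(I_n+XAA^\top X\right)=\det\!\left(I_m+A^\top X^2 A\right)=\det\!\left(A^\top\Diag(\hat{x})A+I_m\right).
\end{align*}
The last equality holds because $X^2=\Diag(\hat{x})$ for $0\le\hat{x}\le\mathbf{e}$. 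Taking natural logarithms on both sides gives the claim.

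The only point that needs care is the finiteness caveat, and it turns out to be harmless. Both matrices above are of the form identity plus a positive-semidefinite matrix, so both are positive definite and both $\ldet$ values are always finite. The hypothesis ``such that the $\ldet$ functions have a finite value'' is therefore automatically satisfied here; it is stated only to parallel Lemma~\ref{lem:nlp-natural-equiv-D-every-x}. The proof needs only Sylvester's identity, which holds for rectangular $M,N$ of compatible sizes, so it imposes no restriction on $m$.
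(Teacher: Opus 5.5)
Your proposal is correct and follows essentially the same route as the paper: substitute $AA^\top=\gamma C-I_n$ and apply Sylvester's identity $\det(I_n+MN)=\det(I_m+NM)$ with $M=\Diag(\hat{x})^{1/2}A$ and $N=M^\top$. Your extra remark is also right: for $\gamma\ge 1/\lambda_{\min}$ both matrices are the identity plus a positive-semidefinite matrix, so the finiteness hypothesis holds automatically.
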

\begin{proof}
   We have  
    \begin{align*}
        &\ldet\left( I_n + \Diag(\hat{x})^{\scriptscriptstyle 1/2}(\gamma C-I_n)\Diag(\hat{x})^{\scriptscriptstyle 1/2} \right) \\
        &\quad  = \ldet\left( I_n + \Diag(\hat{x})^{\scriptscriptstyle 1/2}{A}{A}^{\top}\Diag(\hat{x})^{\scriptscriptstyle 1/2}\right)\\
        &\quad =\ldet\left( I_m + {A}^\top\Diag(\hat{x}){A}\right).
    \end{align*}
\qed \end{proof}

\begin{theorem}\label{thm:2intervals}
Let  $C \!\in \!\mathbb{S}^n_{+}$\, with greatest eigenvalue $\lambda_{\max}$\,. For all $\gamma\in
(0,1/\lambda_{\max}]
$,  
{\rm\ref{nlpid_bound}}
is a convex optimization problem.
Furthermore, if $C \!\in \!\mathbb{S}^n_{++}$\, with least eigenvalue $\lambda_{\min}$\,, then, for all $\gamma\in[1/\lambda_{\min}\,,+\infty)$,  
{\rm\ref{nlpid_bound}}
is a convex optimization problem.
\end{theorem}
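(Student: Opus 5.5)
The plan is to use Lemmas~\ref{lem:nlp-natural-equiv-D-every-x} and \ref{lem:nlp-natural-equiv-F-every-x} to rewrite the objective of \ref{nlpid_bound} as the objective of a natural bound. The function $x\mapsto \ldet(A^\top \Diag(x) A + B^\top B)$ is concave on its domain: it is the composition of the concave function $\ldet$ on $\mathbb{S}^m_{++}$ with an affine map of $x$. The feasible region $\{x\in[0,1]^n:\mathbf{e}^\top x=s\}$ is a polytope and the term $-s\log\gamma$ is a constant. So, in each $\gamma$-range, it suffices to show that the NLP-Id objective coincides with such an affine-composed $\ldet$, with matching domains.

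\emph{First interval.} Fix $\gamma\in(0,1/\lambda_{\max}]$ and take $A,B$ as in Lemma~\ref{lem:nlp-natural-equiv-D-every-x}; these are well defined because $I_n-\gamma\Lambda\succeq 0$. That lemma gives, for every $0\le \hat x\le \mathbf{e}$ with finite values,
\[
\ldet\!\left(I_n+\Diag(\hat x)^{1/2}(\gamma C-I_n)\Diag(\hat x)^{1/2}\right)=\ldet\!\left(A^\top\Diag(\mathbf{e}-\hat x)A+B^\top B\right).
\]
The right-hand side is concave in $\hat x$, since $\hat x\mapsto \mathbf{e}-\hat x$ is affine. I also need to check that the two sides are $-\infty$ at the same points. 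From the proof of that lemma, both matrices equal $I_n - M$ up to a similarity, namely $I_n-\Diag(\hat x)^{1/2}AA^\top\Diag(\hat x)^{1/2}$ and $I_n-A^\top\Diag(\hat x)A$. These two matrices share their nonzero spectrum shifted by one, so their determinants agree everywhere, including where they vanish. Hence the identity holds on the whole box with the extended-valued convention, and the NLP-Id objective is a concave extended-real function. Maximizing it over a polytope is therefore a convex optimization problem.

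\emph{Second interval.} If $C\succ0$ and $\gamma\ge1/\lambda_{\min}$, then $\gamma C-I_n\succeq0$, so a factor $A$ with $AA^\top=\gamma C-I_n$ exists, for example the symmetric square root. Lemma~\ref{lem:nlp-natural-equiv-F-every-x} rewrites the objective as $\ldet(A^\top\Diag(\hat x)A+I_m)$. Here the matrix is always $\succeq I_m$, so the value is finite on the entire box and the equality holds everywhere by Sylvester's determinant identity. Concavity follows exactly as before.

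The main (mild) obstacle is the domain bookkeeping in the first case. The lemma is stated only where the $\ldet$ values are finite, so I will note explicitly that the determinant identity $\det(I-XY)=\det(I-YX)$ extends it to the points where the value is $-\infty$. This makes the objective a proper concave function, and \ref{nlpid_bound} is a convex program.
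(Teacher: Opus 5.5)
Your proof is correct and follows the same route as the paper. The paper's proof simply invokes Lemmas~\ref{lem:nlp-natural-equiv-D-every-x} and~\ref{lem:nlp-natural-equiv-F-every-x} to rewrite the NLP-Id objective as the natural-bound objective, i.e., $\ldet$ of an affine function of $x$, which is concave. Your extra check via $\det(I-XY)=\det(I-YX)$, showing that the identity also holds where the value is $-\infty$, supplies domain bookkeeping that the paper leaves implicit.
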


\begin{proof}
    The result follows directly from Lemmas \ref{lem:nlp-natural-equiv-D-every-x} and \ref{lem:nlp-natural-equiv-F-every-x}.
    \qed
\end{proof}

The fact that {\rm{\ref{nlpid_bound}}} is 
a convex optimization problem for $\gamma=1/\lambda_{\max}$ is a well-known result from \cite{AFLW_Using}, and that it is actually convex on all of $(0,1/\lambda_{\max}]$ can be derived from further results in \cite{AFLW_Using}. The second half of Theorem \ref{thm:2intervals}, that 
{\rm{\ref{nlpid_bound}}} is 
a convex optimization problem for an additional interval, is entirely new and surprising.

The following result was established in \cite{ponte2026relationshipmesp01dopt} for $\gamma:=1/\lambda_{\max}$\,. We extend the result for all $\gamma\in(0,1/\lambda_{\max}]$. Although the proof from \cite{ponte2026relationshipmesp01dopt} is directly extended, we include it for completeness.

\begin{theorem}[\protect{\cite[Thm.~28]{ponte2026relationshipmesp01dopt}}]\label{thm:nlp-natural-D-equiv}
    For  $C \!\in \!\mathbb{S}^n_{+}$\, with greatest eigenvalue $\lambda_{\max}$\,,  consider the $\MESP$ instance $\MESP(C,s)$ and a real Schur decomposition $\Phi\Lambda\Phi^\top$ of $C$. Let $\gamma\in(0,1/\lambda_{\max}]$, and let $\DOPT(A,B,\allowbreak n-s):=\mathcal{D}_\gamma(\MESP(C,s);\Phi,\gamma)$. Then, 
the $\mathcal{D}_{\gamma}$-induced natural bound is equal to the 
{\rm\ref{nlpid_bound}}
bound for $\MESP(C,s)$; that is, 
$\znaturalthing(A,B,n-s) - s\log \gamma=
\znlpthingmesp(C,s)$.
\end{theorem}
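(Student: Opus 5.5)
The plan is to derive the equality directly from Lemma~\ref{lem:nlp-natural-equiv-D-every-x}, using the complementation change of variables $y:=\mathbf{e}-x$ to match the two feasible regions.

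First, with $A:=\Phi(I_n-\gamma\Lambda)^{\scriptscriptstyle 1/2}$ and $B:=(\gamma\Lambda)^{\scriptscriptstyle 1/2}$ as in $\mathcal{D}_\gamma$, we compare the feasible sets. The feasible set of \ref{nlpid_bound} is $P_s:=\{x\in[0,1]^n:\mathbf{e}^\top x=s\}$. The feasible set of the natural bound $\znaturalthing(A,B,n-s)$ is $P_{n-s}$. The affine involution $x\mapsto \mathbf{e}-x$ is a bijection from $P_s$ onto $P_{n-s}$.

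Second, Lemma~\ref{lem:nlp-natural-equiv-D-every-x} shows that, at every $\hat x\in P_s$ where the log-determinants are finite, the NLP-Id objective (before subtracting $s\log\gamma$) equals the natural-bound objective evaluated at $\mathbf{e}-\hat x$.

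The one point needing care is the set where an $\ldet$ is $-\infty$. In the proof of Lemma~\ref{lem:nlp-natural-equiv-D-every-x}, each step is an equality of the underlying determinants, not just of their logarithms:
\begin{itemize}
\item $\det(I_n-\Diag(\hat x)^{1/2}AA^\top\Diag(\hat x)^{1/2})=\det(I_n-A^\top\Diag(\hat x)A)$ by Sylvester's determinant identity;
\item $I_n-A^\top\Diag(\hat x)A=A^\top\Diag(\mathbf{e}-\hat x)A+B^\top B$ because $A^\top A+B^\top B=I_n$.
\end{itemize}
Both determinants are nonnegative: the second matrix is psd, and the first is similar to it. Hence one log-determinant is $-\infty$ exactly when the other is, and the two objectives agree as extended-real functions on all of $P_s$.

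The two suprema over corresponding points therefore coincide. Both are attained, since the sets are compact and the objectives are upper semicontinuous. They are finite because $s\le r$ gives a feasible point of positive determinant; alternatively, finiteness is not needed for the equality. Subtracting $s\log\gamma$ from both sides yields $\znaturalthing(A,B,n-s)-s\log\gamma=\znlpthingmesp(C,s)$.

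The main obstacle is only this bookkeeping of infinite values; the substantive algebra is already in Lemma~\ref{lem:nlp-natural-equiv-D-every-x}.
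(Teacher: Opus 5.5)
Your proposal is correct and is essentially the paper's proof: you use the complementation bijection $x\mapsto\mathbf{e}-x$ between the two feasible sets, combine it with the pointwise identity of Lemma~\ref{lem:nlp-natural-equiv-D-every-x}, and then subtract $s\log\gamma$. Your extra check that the underlying determinants coincide, not only their logarithms, so that $-\infty$ values correspond in both directions, makes explicit what the paper handles implicitly by considering only feasible $\hat x$ with finite objective value.
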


\begin{proof}  
   By the definition of $\mathcal{D}_\gamma$\,, we have 
$A:=\Phi(I_n-\gamma\Lambda)^{\scriptscriptstyle 1/2}$ and $B:=(\gamma\Lambda)^{\scriptscriptstyle 1/2}$.   Let $\hat{x}$ be a feasible solution to 
{\rm\ref{nlpid_bound}}
with finite objective value. Then,  from Lemma~\ref{lem:nlp-natural-equiv-D-every-x}, we have  
\begin{equation*} 
\begin{array}{l}
        \ldet\left( I_n + \Diag(\hat{x})^{\scriptscriptstyle 1/2}(\gamma C-I_n)\Diag(\hat{x})^{\scriptscriptstyle 1/2} \right) - s\log\gamma\\
     \quad =\ldet\left( {A}^\top \Diag(\mathbf{e}-\hat{x}) {A} + {B}^\top {B}\right) -s\log\gamma,
    \end{array}
    \end{equation*}
where on the left-hand side of the equation  we have the objective value of $\NLPId(\gamma)$ at $\hat{x}$, and on the  right-hand side we have the objective value of \ref{natural_bound} at the corresponding feasible solution $\mathbf{e}\!-\!\hat{x}$, subtracted by  $s\!\log\!\gamma$.
The result follows.
\qed \end{proof}

Next, we derive a similar result when $\gamma$ belongs to the new interval of convexity.

\begin{theorem}\label{thm:nlp-natural-F-equiv}
    For  $C \!\in \!\mathbb{S}^n_{++}$\, with least eigenvalue $\lambda_{\min}$\,,  consider the $\MESP$ instance $\MESP(C,s)$. Let $\gamma\in[1/\lambda_{\min}\,,+\infty)$, let $A\in\mathbb{R}^{n\times m}$ be such that $AA^\top:=\gamma C-I_n$\,, and  let $\DOPT(A,I_m\,,s):=\mathcal{F}_{\gamma}(\MESP(C,s);A,\gamma)$. Then, 
the $\mathcal{F}_{\gamma}$-induced natural bound is equal to the 
{\rm\ref{nlpid_bound}}
bound for $\MESP(C,s)$; that is, 
$\znaturalthing(A,I_m,s) - s\log \gamma=
\znlpthingmesp
(C,s)$.
\end{theorem}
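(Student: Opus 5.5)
The plan mirrors the proof of Theorem~\ref{thm:nlp-natural-D-equiv}, replacing Lemma~\ref{lem:nlp-natural-equiv-D-every-x} with Lemma~\ref{lem:nlp-natural-equiv-F-every-x}. The key difference is that $\mathcal{F}_\gamma$ does not complement variables. The feasible sets of \ref{nlpid_bound} and of \ref{natural_bound} for $\DOPT(A,I_m,s)$ are therefore literally the same set $\{x\in[0,1]^n:\mathbf{e}^\top x=s\}$, and the identity map serves as the correspondence between them.

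First, we fix any feasible $\hat{x}$. By Lemma~\ref{lem:nlp-natural-equiv-F-every-x},
\[
\ldet\!\left(I_n+\Diag(\hat{x})^{1/2}(\gamma C-I_n)\Diag(\hat{x})^{1/2}\right)-s\log\gamma
=\ldet\!\left(A^\top\Diag(\hat{x})A+I_m\right)-s\log\gamma .
\]
The lemma as stated requires both $\ldet$ values to be finite. Here this holds automatically for every feasible $\hat{x}$. Since $\gamma C-I_n=AA^\top\succeq 0$, the matrix $I_n+\Diag(\hat{x})^{1/2}AA^\top\Diag(\hat{x})^{1/2}$ is $\succeq I_n$. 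Likewise $A^\top\Diag(\hat{x})A+I_m\succeq I_m$. Both determinants are therefore at least $1$, and the equality of the two objectives holds pointwise on the whole feasible region. (This also uses the Sylvester determinant identity underlying the lemma, $\det(I_n+UU^\top)=\det(I_m+U^\top U)$ with $U=\Diag(\hat{x})^{1/2}A$.)

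Second, we conclude. Two maximization problems over the same feasible set whose objectives coincide pointwise (up to the common constant $-s\log\gamma$) have equal optimal values. Both feasible sets are compact and both objectives are continuous, so the maxima are attained. Hence $\znaturalthing(A,I_m,s)-s\log\gamma=\znlpthingmesp(C,s)$.

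There is no real obstacle here. The only point requiring care is the finiteness and domain issue: in the $\mathcal{D}_\gamma$ case some feasible points can give $\ldet=-\infty$, whereas here positive semidefiniteness of $\gamma C-I_n$ rules this out. This is what makes the correspondence exact everywhere rather than only at points with finite objective value.
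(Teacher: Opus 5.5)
Your proposal is correct and follows essentially the same route as the paper: apply Lemma~\ref{lem:nlp-natural-equiv-F-every-x} pointwise, using the identity correspondence between the two (literally identical) feasible sets, and conclude that the optimal values agree up to the common shift $-s\log\gamma$. Your extra observation is a small but welcome tightening of the paper's argument. Since $AA^\top=\gamma C-I_n\succeq 0$, both $\ldet$ values are finite, indeed nonnegative, on the whole feasible region, whereas the paper simply restricts attention to points with finite objective value.
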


\begin{proof}  
  Let $\hat{x}$ be a feasible solution to 
  {\rm\ref{nlpid_bound}}
  with finite objective value. Then,  from Lemma \ref{lem:nlp-natural-equiv-F-every-x}, we have  
\begin{equation*} 
\begin{array}{l}
       \ldet\left( I_n + \Diag(\hat{x})^{\scriptscriptstyle 1/2}(\gamma C-I_n)\Diag(\hat{x})^{\scriptscriptstyle 1/2} \right) - s\log\gamma\\ 
     \quad =\ldet\left( {A}^\top \Diag(\hat{x}) {A} + I_m\right) -s\log\gamma,
    \end{array}
    \end{equation*}
where on the left-hand side of the equation we have the objective value of 
{\rm\ref{nlpid_bound}}
at $\hat{x}$,  and on the  right-hand side we have the objective value of \ref{natural_bound} at  $\!\hat{x}$, shifted by  $s\!\log\!\gamma$.
The result follows.
\qed \end{proof}

Next, we provide a new interpretation of the well-known complementary \ref{nlpid_bound} bound for MESP. By considering the new convexity interval for $\gamma$, we show that it is equivalent to the original \ref{nlpid_bound} bound
for \ref{MESP} when $C$ is positive-definite.

\begin{theorem}\label{eqcomp-orig}
    Let $C\in\mathbb{S}^n_{++}$\,.  We have $\hat\gamma\in(0,1/\lambda_{\max}(C^{\scriptscriptstyle -1})]$ if and only if  $1/\hat\gamma \in[1/\lambda_{\min}(C),+\infty)$. 
    Furthermore, the complementary 
    {\rm\ref{nlpid_bound}}
    bound for $\gamma:=\hat\gamma$  is equal to  the   
    {\rm\ref{nlpid_bound}}
    bound for $\gamma:=1/\hat\gamma$; that is,
    \begin{align}
    \label{eq:znlpid-diff-comp}
 & 
{z}_{\text{\normalfont\protect\tiny NLP-Id$(\hat\gamma)$}}
 (C^{\scriptscriptstyle -1},n-s) +\ldet(C) =
{z}_{\text{\normalfont\protect\tiny NLP-Id$(1/\hat\gamma)$}}
(C,s).\nonumber
\end{align}
\end{theorem}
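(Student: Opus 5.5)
The first claim is immediate, since $\lambda_{\max}(C^{\scriptscriptstyle -1})=1/\lambda_{\min}(C)$: we have $\hat\gamma\le 1/\lambda_{\max}(C^{\scriptscriptstyle -1})=\lambda_{\min}(C)$ if and only if $1/\hat\gamma\ge 1/\lambda_{\min}(C)$. For the equality of bounds, I will not compare optimal values abstractly. Instead I will show a pointwise identity between the two objective functions under the complementation $y=\mathbf{e}-x$. This map is a bijection between the feasible set of {\rm NLP-Id$(\hat\gamma)$} for $(C^{\scriptscriptstyle -1},n-s)$ and that of {\rm NLP-Id$(1/\hat\gamma)$} for $(C,s)$. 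The plan is to pass through Lemma~\ref{lem:nlp-natural-equiv-D-every-x} on the complementary instance and then come back through Lemma~\ref{lem:nlp-natural-equiv-F-every-x} on the original one.

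Concretely, take a real Schur decomposition $C=\Phi\Lambda\Phi^\top$, so $C^{\scriptscriptstyle -1}=\Phi\Lambda^{\scriptscriptstyle -1}\Phi^\top$. Apply Lemma~\ref{lem:nlp-natural-equiv-D-every-x} to $C^{\scriptscriptstyle -1}$ with parameter $\hat\gamma$, using $A:=\Phi(I_n-\hat\gamma\Lambda^{\scriptscriptstyle -1})^{\scriptscriptstyle 1/2}$ and $B:=(\hat\gamma\Lambda^{\scriptscriptstyle -1})^{\scriptscriptstyle 1/2}$. With $x=\mathbf{e}-y$ this gives
\[
\ldet\!\big(I_n+\Diag(y)^{\scriptscriptstyle 1/2}(\hat\gamma C^{\scriptscriptstyle -1}\!-\!I_n)\Diag(y)^{\scriptscriptstyle 1/2}\big)=\ldet\!\big(A^\top\Diag(x)A+\hat\gamma\Lambda^{\scriptscriptstyle -1}\big).
\]
Since $\hat\gamma\Lambda^{\scriptscriptstyle -1}\succ 0$, I factor it out on both sides and set $\tilde A:=A(\Lambda/\hat\gamma)^{\scriptscriptstyle 1/2}=\Phi(\Lambda/\hat\gamma-I_n)^{\scriptscriptstyle 1/2}$. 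The factor matrices are diagonal and commute, so this is legitimate. The right-hand side becomes
\[
\ldet(\hat\gamma\Lambda^{\scriptscriptstyle -1})+\ldet\!\big(I_n+\tilde A^\top\Diag(x)\tilde A\big),
\]
where $\ldet(\hat\gamma\Lambda^{\scriptscriptstyle -1})=n\log\hat\gamma-\ldet(C)$. Now $\tilde A\tilde A^\top=(1/\hat\gamma)C-I_n$ and $1/\hat\gamma\ge 1/\lambda_{\min}(C)$. So Lemma~\ref{lem:nlp-natural-equiv-F-every-x}, with $\gamma=1/\hat\gamma$, converts $\ldet(I_n+\tilde A^\top\Diag(x)\tilde A)$ into the {\rm NLP-Id$(1/\hat\gamma)$} $\ldet$-term for $C$ at $x$.

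Assembling, the complementary objective at $y$ is the sum of three pieces: the term $\ldet(\cdots)$ computed above, the offset $-(n-s)\log\hat\gamma$, and the constant $\ldet(C)$. This sum equals
\[
\ldet\!\big(I_n+\Diag(x)^{\scriptscriptstyle 1/2}(C/\hat\gamma-I_n)\Diag(x)^{\scriptscriptstyle 1/2}\big)+n\log\hat\gamma-\ldet(C)-(n-s)\log\hat\gamma+\ldet(C),
\]
which is exactly the {\rm NLP-Id$(1/\hat\gamma)$} objective $\ldet(\cdots)-s\log(1/\hat\gamma)$ at $x$. Maximizing over the bijectively corresponding feasible sets then yields the claimed equality.

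The only delicate point is the proviso in the lemmas that the $\ldet$ values be finite. I expect this to be the main (minor) obstacle. I would handle it by observing that each step above is really an identity of determinants, via $\det(I-UV)=\det(I-VU)$ and the factorization of a positive-definite diagonal matrix, before logs are taken. Hence one side is $-\infty$ exactly when the other is, and infeasible-value points correspond as well, so the suprema agree.
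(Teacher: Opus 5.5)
Your proposal is correct and follows essentially the same route as the paper. Both arguments apply Lemma~\ref{lem:nlp-natural-equiv-D-every-x} to $C^{-1}$ with $A=\Phi(I_n-\hat\gamma\Lambda^{-1})^{1/2}$ and $B=(\hat\gamma\Lambda^{-1})^{1/2}$, apply Lemma~\ref{lem:nlp-natural-equiv-F-every-x} to $C$ with the factor $\Phi(\Lambda/\hat\gamma-I_n)^{1/2}$, and link the two by factoring out the diagonal matrix $\hat\gamma\Lambda^{-1}$, which contributes $n\log\hat\gamma-\ldet(C)$. The differences are only cosmetic: the paper works from both ends toward a common middle expression rather than chaining in one direction, and your observation that each step is a determinant identity handles the finiteness proviso a bit more carefully than the paper does.
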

\begin{proof}
    The if and only if statement is straightforward. For the ``furthermore part'',
    consider a real Schur decomposition $\Phi\Lambda\Phi^\top$ of $C$. Let $0\leq \hat{x}\leq \mathbf{e}$, such that $\mathbf{e}^\top\hat x=s$ and the $\ldet$ functions in the following have a finite value. 
    
    Let $\tilde A:= \Phi(I_n-\hat\gamma \Lambda^{\scriptscriptstyle -1})^{\scriptscriptstyle 1/2}$ and $\tilde B:= (\hat\gamma \Lambda^{\scriptscriptstyle -1})^{\scriptscriptstyle 1/2}$. Then,  we have
\begin{equation*} 
\begin{array}{l}
        \ldet\left( I_n + \Diag(\mathbf{e}-\hat{x})^{\scriptscriptstyle 1/2}(\hat \gamma C^{\scriptscriptstyle -1}-I_n)\Diag(\mathbf{e}-\hat{x})^{\scriptscriptstyle 1/2} \right)\\
     \qquad - (n\!-\!s)\log\hat\gamma +\ldet(C)\\
     \quad =\ldet\left( {\tilde A}^\top \Diag(\hat{x}) {\tilde A} + {\tilde B}^\top {\tilde B}\right) -(n\!-\!s)\log\hat\gamma +\ldet(C)\\
     \quad =\ldet\left((I_n\!-\!\hat\gamma \Lambda^{\scriptscriptstyle -1})^{\scriptscriptstyle 1/2} \Phi^\top \Diag(\hat{x}) \Phi(I_n\!-\!\hat\gamma \Lambda^{\scriptscriptstyle -1})^{\scriptscriptstyle 1/2} + \hat\gamma\Lambda^{\scriptscriptstyle -1}\right)\\
     \qquad -(n\!-\!s)\log\hat\gamma +\ldet(C),
    \end{array}
    \end{equation*}
     where the first equation follows from Lemma \ref{lem:nlp-natural-equiv-D-every-x}.
     
    Let $\bar A=\Phi(\frac{1}{\hat\gamma}\Lambda-I_n)^{\scriptscriptstyle 1/2}$.  Then, $\bar{A}\bar{A}^\top=\frac{1}{\hat\gamma} C-I_n$\,, and 
    \begin{equation*} 
\begin{array}{l}
       \ldet\left( I_n + \Diag(\hat{x})^{\scriptscriptstyle 1/2}(\frac{1}{\hat\gamma} C-I_n)\Diag(\hat{x})^{\scriptscriptstyle 1/2} \right) - s\log\frac{1}{\hat\gamma}\\ 
     \quad =\ldet\left( {\bar A}^\top \Diag(\hat{x}) {\bar A} + I_m\right) +s\log{\hat\gamma}\,
     \\ 
     \quad =\ldet\left((\frac{1}{\hat\gamma}\Lambda-I_n)^{\scriptscriptstyle 1/2}\Phi^\top \Diag(\hat{x}) \Phi(\frac{1}{\hat\gamma}\Lambda-I_n)^{\scriptscriptstyle 1/2} + I_m\right)\\
     \qquad +s\log{\hat\gamma}\,
     \\ 
     \quad =\ldet\left((\hat\gamma \Lambda^{\scriptscriptstyle -1})^{\scriptscriptstyle 1/2}(\frac{1}{\hat\gamma}\Lambda\!-\!I_n)^{\scriptscriptstyle 1/2}\Phi^\top \Diag(\hat{x}) \Phi(\frac{1}{\hat\gamma}\Lambda\!-\!I_n)^{\scriptscriptstyle 1/2} (\hat\gamma \Lambda^{\scriptscriptstyle -1})^{\scriptscriptstyle 1/2}\right.\\
     \qquad\left. + \hat\gamma\Lambda^{\scriptscriptstyle -1}\right)  +\ldet(\frac{1}{\hat\gamma}\Lambda)+s\log{\hat\gamma}\,
     \\ 
     \quad =\ldet\left(I_n\!-\!\hat\gamma \Lambda^{\scriptscriptstyle -1})^{\scriptscriptstyle 1/2}\Phi^\top \Diag(\hat{x}) \Phi(I_n\!-\!\hat\gamma \Lambda^{\scriptscriptstyle -1})^{\scriptscriptstyle 1/2} + \hat\gamma\Lambda^{\scriptscriptstyle -1}\right)\\
     \qquad +\ldet(C)-(n\!-\!s)\log{\hat\gamma},
    \end{array}
    \end{equation*}
    where the first equation follows from Lemma \ref{lem:nlp-natural-equiv-F-every-x}.
    The result follows. \qed
\end{proof}

From the result in \cite[Lem. 23]{ponte2026relationshipmesp01dopt}, we can conclude that the \ref{nlpid_bound} bound is non-increasing in $\gamma$ on $(0,1/\lambda_{\max}]$. In the following, we use the same argument to analyze the monotonicity of the \ref{nlpid_bound} in the other interval where the convexity of the relaxation is guaranteed. 

\begin{lemma}\label{lem:natural_to_nlp_modified}
Let $C\in\mathbb{S}^n_{++}$ and $0<s<n$. The {\rm\ref{nlpid_bound}}  bound is non-increasing in $\gamma$ on $(0,1/\lambda_{\max}]$ and  non-decreasing in $\gamma$ on $[1/\lambda_{\min},+\infty)$.
\end{lemma}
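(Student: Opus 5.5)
The plan is to prove monotonicity pointwise in $x$. For each fixed feasible $x$ (that is, $\mathbf{e}^\top x=s$, $x\in[0,1]^n$), I will show that the objective of \ref{nlpid_bound} is monotone in $\gamma$ on each interval. Since the feasible set does not depend on $\gamma$, the maximum inherits the monotonicity: a pointwise maximum of non-increasing (non-decreasing) functions is non-increasing (non-decreasing).

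Fix $x$ and write $X:=\Diag(x)$, $M(\gamma):=I_n+X^{1/2}(\gamma C-I_n)X^{1/2}$, and
\[
f(\gamma):=\ldet M(\gamma)-s\log\gamma .
\]
Wherever $M(\gamma)\succ 0$, the derivative is
\[
f'(\gamma)=\Tr\!\left(M(\gamma)^{-1}X^{1/2}CX^{1/2}\right)-s/\gamma .
\]
The key observation is the identity $\gamma X^{1/2}CX^{1/2}=M(\gamma)-I_n+X$. Substituting it, and using $\Tr(X)=s$ so that $n-s=\Tr(I_n-X)$, gives
\[
f'(\gamma)=\tfrac{1}{\gamma}\left(n-\Tr\!\left(M^{-1}(I_n-X)\right)-s\right)
=\tfrac{1}{\gamma}\Tr\!\left((I_n-X)^{1/2}(I_n-M^{-1})(I_n-X)^{1/2}\right).
\]
Since $I_n-X\succeq 0$, the sign of $f'$ is controlled by the L\"owner relation between $M$ and $I_n$.

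On $(0,1/\lambda_{\max}]$ we have $\gamma C-I_n\preceq 0$, so $0\prec M\preceq I_n$ wherever the objective is finite. Hence $I_n-M^{-1}\preceq 0$ and $f'\le 0$. On $[1/\lambda_{\min},+\infty)$ we have $\gamma C-I_n\succeq 0$, so $M\succeq I_n$. Hence $I_n-M^{-1}\succeq 0$ and $f'\ge 0$; here $M$ is always nonsingular, so $f$ is finite and differentiable on the whole interval. Integrating $f'$ then gives the pointwise monotonicity on each interval.

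The only delicate step is handling points where the objective equals $-\infty$ on the first interval, since $M(\gamma)$ may be singular there. Take $\gamma_1<\gamma_2\le 1/\lambda_{\max}$. Because $M(\gamma)$ is non-increasing in $\gamma$ in the L\"owner order ($M(\gamma_1)-M(\gamma_2)=(\gamma_2-\gamma_1)X^{1/2}CX^{1/2}\succeq0$), finiteness at $\gamma_2$ implies $M(\gamma)\succeq M(\gamma_2)\succ 0$ for all $\gamma\in[\gamma_1,\gamma_2]$. The derivative argument therefore applies on the whole segment. If instead the value at $\gamma_2$ is $-\infty$, the inequality $f(\gamma_1)\ge f(\gamma_2)$ is trivial. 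This yields $f(\gamma_1)\ge f(\gamma_2)$ in all cases, and taking maxima over $x$ completes the proof.

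As an alternative to the derivative computation, one could use Theorems~\ref{thm:nlp-natural-D-equiv} and~\ref{thm:nlp-natural-F-equiv} to rewrite the bound as a natural bound, mirroring \cite[Lem.~23]{ponte2026relationshipmesp01dopt}. However, the direct trace identity above seems the cleanest route.
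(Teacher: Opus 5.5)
Your proof is correct. On $(0,1/\lambda_{\max}]$ it is essentially the paper's argument: your trace formula is exactly \eqref{derivativeh}, and the sign comes from $0\prec M\preceq I_n$. You also fill in a step the paper skips. It simply assumes $\Theta(\gamma)\succ 0$ on the whole interval, whereas your L\"owner monotonicity $M(\gamma_1)\succeq M(\gamma_2)$ for $\gamma_1<\gamma_2$ justifies this.

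The genuine difference is on $[1/\lambda_{\min},+\infty)$. The paper does not differentiate there. It transfers the first-interval result through the complementation identity of Theorem~\ref{eqcomp-orig}, $z_{\text{NLP-Id}(1/\hat\gamma)}(C,s)=z_{\text{NLP-Id}(\hat\gamma)}(C^{-1},n-s)+\ldet(C)$. Under this identity, increasing $\gamma=1/\hat\gamma$ on $[1/\lambda_{\min},+\infty)$ corresponds to decreasing $\hat\gamma$ on $(0,1/\lambda_{\max}(C^{-1})]$, where the bound for $C^{-1}$ is already known to be non-increasing. You instead note that the same derivative formula flips sign because $M\succeq I_n$ there.

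The paper's route is shorter once Theorem~\ref{eqcomp-orig} is available, and it highlights the scaling--complementation link that is the paper's theme. Your route is self-contained and handles both intervals with a single formula. It also makes explicit that, for each fixed feasible $x$, the objective itself is non-decreasing on the second interval with no singularity issues.
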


\begin{proof}
It suffices to establish the claim on $(0,1/\lambda_{\max}]$: the behavior on
$[1/\lambda_{\min},+\infty)$ then follows from the equivalence between the
\ref{nlpid_bound} bound and its complementary counterpart established in
Theorem~\ref{eqcomp-orig}.

Let   $0\leq \hat x\leq \mathbf{e}$, with $\mathbf{e}^\top \hat{x}=s$. Define $\Theta(\gamma):= I_n + \Diag(\hat{x})^{\scriptscriptstyle 1/2}(\gamma C-I_n)\Diag(\hat{x})^{\scriptscriptstyle 1/2}$. Assume that $\Theta(\gamma) \succ 0$ for all $\gamma \in \left(0,1/\lambda_{\max}\right]$.  Let 
     \begin{equation*} 
         h(\gamma) := \ldet( \Theta(\gamma)) - s\log\gamma,
     \end{equation*}
     which gives the objective value of \ref{nlpid_bound} at the fixed point $\hat x$, for a given $\gamma$.

We have that 
\begin{align}
    h'(\gamma) &= \gamma^{\scriptscriptstyle -1}   \Tr(\Theta(\gamma)^{\scriptscriptstyle -1}\gamma \Diag(\hat{x})^{\scriptscriptstyle 1/2} C\Diag(\hat{x})^{\scriptscriptstyle 1/2} ) - \gamma^{\scriptscriptstyle -1} s\nonumber\\
    &=\gamma^{\scriptscriptstyle -1}\Tr(\Theta(\gamma)^{\scriptscriptstyle -1}(\Theta(\gamma) - \Diag(\mathbf{e}-\hat{x}))) -  \gamma^{\scriptscriptstyle -1}\Tr(\Diag(\hat{x}))\nonumber\\
     &=\gamma^{\scriptscriptstyle -1}\Tr(I_n -\Theta(\gamma)^{\scriptscriptstyle -1}\Diag(\mathbf{e}-\hat{x})) -  \gamma^{\scriptscriptstyle -1}\Tr(\Diag(\hat{x})) \nonumber\\
    &= \gamma^{\scriptscriptstyle -1}\Tr(\Diag(\mathbf{e}-\hat{x}) - \Theta(\gamma)^{\scriptscriptstyle -1}\Diag(\mathbf{e}-\hat{x}))\nonumber\\
    &= \gamma^{\scriptscriptstyle -1}\Tr((I_n - \Theta(\gamma)^{\scriptscriptstyle -1})\Diag(\mathbf{e}-\hat{x}))\nonumber\\
     &= \gamma^{\scriptscriptstyle -1}\Tr(\Diag(\mathbf{e}-\hat{x})^{\scriptscriptstyle 1/2}(I_n-\Theta(\gamma)^{\scriptscriptstyle -1})\Diag(\mathbf{e}-\hat{x})^{\scriptscriptstyle 1/2})\nonumber\\
     &= \gamma^{\scriptscriptstyle -1}\textstyle\sum_{j \in N} (1-\hat{x}_j)  (I_n-\Theta(\gamma)^{\scriptscriptstyle -1})_{jj}\,.\label{derivativeh}
     \end{align}
As $\gamma \in (0, 1/\lambda_{\max}]$\,, then $\gamma C \preceq I_n \Rightarrow \Theta(\gamma) \preceq I_n \Rightarrow \Theta(\gamma)^{\scriptscriptstyle -1} \succeq I_n\,,$ so we conclude that $h$ is non-increasing in the interval $(0,1/\lambda_{\max}]$.

Now, consider $\hat x$ as an optimal solution to \ref{nlpid_bound} for any given $\gamma\in(0,1/\lambda_{\max}]$\,. The result follows. 
\qed 
\end{proof}


\section{Analysis of the Hessian of the objective function in \ref{nlpid_bound}}\label{sec:hessian}

Next, we calculate the Hessian of the objective function in \ref{nlpid_bound}. The Hessian from \cite[Lem. 3.5.3]{FLbook} simplifies, considering the specific parameters used in the definition of \ref{nlpid_bound}. Finally, we verify that the Hessian is negative semidefinite throughout both intervals for $\gamma$ where convexity of the relaxation is guaranteed, confirming Theorem \ref{thm:2intervals} in a different way.

\begin{lemma}\label{lem:hes_f}
	Let $R(x):=I_n +  \Diag(x)^{\scriptscriptstyle 1/2}(\gamma C-I_n)\Diag(x)^{\scriptscriptstyle 1/2}$ and $f(x):=\ldet(R(x))$, for $x \in\mathbb{R}^n_{++}$ such that $R(x) \succ 0$.  The Hessian of $f$ is
\begin{equation*}
\begin{array}{l}
\nabla^{\scriptscriptstyle 2} f(x) \!=\! \Diag(x)^{\scriptscriptstyle -1}
\left(
2\Diag\left(R(x)^{\scriptscriptstyle -1}\right) \!- I_n \!- \left(R(x)^{\scriptscriptstyle -1}\right)^{\scriptscriptstyle{(2)}}
\right)
\Diag(x)^{\scriptscriptstyle -1}.
\end{array}
\end{equation*} 
\end{lemma}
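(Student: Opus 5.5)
We compute the Hessian directly. The plan is to rewrite $R(x)$ in a form whose dependence on $x$ is only through a diagonal scaling. Then we differentiate twice using the standard first- and second-order formulas for $\ldet$, and finally simplify with identities that come from $R(x)$ itself.

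\emph{Reformulation.} Write $D:=\Diag(x)$ and $M:=\gamma C$. Then
\[
R(x)=I_n-D+D^{1/2}MD^{1/2}.
\]
For $x>0$ we have $R(x)=D^{1/2}\big(D^{-1}-I_n+M\big)D^{1/2}$. Hence
\[
f(x)=\textstyle\sum_i\log x_i+\ldet\big(G(x)\big),\qquad G(x):=M-I_n+\Diag(x)^{-1}.
\]
Note that $G(x)\succ0$ whenever $R(x)\succ0$, and $G(x)^{-1}=D^{1/2}R(x)^{-1}D^{1/2}$. This removes the awkward square roots, because $G$ is affine in the variables $y_i:=1/x_i$.

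\emph{Derivatives.} We have $\partial G/\partial x_i=-x_i^{-2}e_ie_i^\top$. The gradient is therefore
\[
\partial f/\partial x_i=x_i^{-1}-x_i^{-2}(G^{-1})_{ii}.
\]
Differentiating again, using $\partial G^{-1}/\partial x_j=x_j^{-2}G^{-1}e_je_j^\top G^{-1}$, gives
\[
\partial^2 f/\partial x_i\partial x_j=\delta_{ij}\big(-x_i^{-2}+2x_i^{-3}(G^{-1})_{ii}\big)-x_i^{-2}x_j^{-2}\big((G^{-1})_{ij}\big)^2 .
\]

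\emph{Back-substitution.} Substitute $(G^{-1})_{ij}=x_i^{1/2}x_j^{1/2}(R^{-1})_{ij}$. The diagonal term becomes $x_i^{-2}\big(2(R^{-1})_{ii}-1\big)$, and the Hadamard term becomes $x_i^{-1}x_j^{-1}\big((R^{-1})_{ij}\big)^2$. The Hadamard term is exactly the $(i,j)$ entry of $D^{-1}(R^{-1})^{(2)}D^{-1}$.

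At this point the diagonal part has $x_i^{-2}$ rather than $x_i^{-1}\cdot x_i^{-1}$ applied to a common matrix, and that is consistent: $D^{-1}(2\Diag(R^{-1})-I_n)D^{-1}$ has diagonal entries $x_i^{-2}(2(R^{-1})_{ii}-1)$. So all terms assemble into
\[
D^{-1}\big(2\Diag(R^{-1})-I_n-(R^{-1})^{(2)}\big)D^{-1},
\]
which is the claimed formula.

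\emph{Main obstacle.} The only delicate point is bookkeeping of the powers of $x_i$ in the substitution $G^{-1}=D^{1/2}R^{-1}D^{1/2}$; this is where the square roots cancel. To guard against sign or exponent slips, I would double-check the result against \cite[Lem.~3.5.3]{FLbook} specialized to the NLP-Id parameters, or test it at $n=1$. In the scalar case $R=1-x+\gamma c\,x$ and $f''=-(\gamma c-1)^2/R^2$. The formula gives $x^{-2}(2/R-1-1/R^2)=-x^{-2}(1-1/R)^2$, and since $R-1=x(\gamma c-1)$ this equals $-(\gamma c-1)^2/R^2$, as required.
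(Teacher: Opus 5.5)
Your computation is correct, but it takes a different route from the paper. The paper gives no derivation of its own. It only states that the general Hessian of the NLP objective in \cite[Lem.~3.5.3]{FLbook} simplifies once the NLP-Id parameters (identity scaling with factor $\gamma$) are substituted.

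You derive the formula from scratch. Your key step is the congruence $R(x)=\Diag(x)^{1/2}\big(\gamma C-I_n+\Diag(x)^{-1}\big)\Diag(x)^{1/2}$, which gives $f(x)=\sum_i\log x_i+\ldet(G(x))$ with $G$ affine in the variables $1/x_i$. This avoids differentiating $\Diag(x)^{1/2}$ at all. The identity $G(x)^{-1}=\Diag(x)^{1/2}R(x)^{-1}\Diag(x)^{1/2}$ then shows exactly why the Hessian is sandwiched between two copies of $\Diag(x)^{-1}$.

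The paper's route is brief and inherits the generality of the book's lemma, which covers the broader NLP family, but it is not self-contained. Yours is short and fully checkable from standard facts (the derivative of $\ldet$ and of the matrix inverse). It uses only $x>0$, which is exactly the lemma's hypothesis, together with the observation that $G(x)\succ 0$ iff $R(x)\succ 0$ by congruence.

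Your gradient, second-derivative and back-substitution steps are all correct, as is the $n=1$ check. One sentence is muddled: the one about the diagonal part having $x_i^{-2}$ ``rather than $x_i^{-1}\cdot x_i^{-1}$''. It should simply say that $\Diag(x)^{-1}\big(2\Diag(R^{-1})-I_n\big)\Diag(x)^{-1}$ has diagonal entries $x_i^{-2}\big(2(R^{-1})_{ii}-1\big)$.
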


\begin{theorem}\label{thm:concave}
    Let $C\in\mathbb{S}^n_{++}$\,.  Let $R(x):=I_n +  \Diag(x)^{\scriptscriptstyle 1/2}(\gamma C-I_n)\Diag(x)^{\scriptscriptstyle 1/2}$ and $f(x):=\ldet(R(x))$, for $x \in\mathbb{R}^n_{++}$ such that $R(x) \succ 0$, then $f$ is concave on $\{ x \in [0,1]^n ~:~ R(x)\succ 0\}$ when $\gamma\in(0,1/\lambda_{\max}]$ or  $\gamma \in[1/\lambda_{\min}\,,+\infty)$.
\end{theorem}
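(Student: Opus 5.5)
The plan is to use the Hessian formula of Lemma~\ref{lem:hes_f} and show that the middle matrix
\[
M(x):=2\Diag\left(R(x)^{\scriptscriptstyle -1}\right)-I_n-\left(R(x)^{\scriptscriptstyle -1}\right)^{\scriptscriptstyle (2)}
\]
is negative semidefinite on each of the two intervals for $\gamma$. Since $\nabla^2 f(x)=\Diag(x)^{-1}M(x)\Diag(x)^{-1}$ is a congruence of $M(x)$, this gives $\nabla^2 f(x)\preceq 0$ wherever $x>0$ and $R(x)\succ0$.

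\textbf{Rewriting $M(x)$.} Write $W:=R(x)^{-1}$. Using $I_n\circ W=\Diag(W)$ and $I_n\circ I_n=I_n$, expand the Hadamard square:
\[
(W-I_n)^{\scriptscriptstyle (2)}=W^{\scriptscriptstyle (2)}-2\Diag(W)+I_n .
\]
Comparing with the definition of $M(x)$ gives the identity
\[
M(x)=-(W-I_n)^{\scriptscriptstyle (2)} .
\]
For an indefinite symmetric matrix the Hadamard square need not be positive semidefinite. So the key step is to show that $W-I_n$ is sign-definite, and then apply the Schur product theorem. Note that $(-Q)\circ(-Q)=Q\circ Q$, so a negative semidefinite $W-I_n$ also works.

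\textbf{Case $\gamma\in(0,1/\lambda_{\max}]$.} Here $\gamma C-I_n\preceq 0$, so $\Diag(x)^{1/2}(\gamma C-I_n)\Diag(x)^{1/2}\preceq0$. Hence $0\prec R(x)\preceq I_n$, so $W\succeq I_n$ and $P:=W-I_n\succeq0$. The Schur product theorem gives $P^{(2)}\succeq0$, hence $M(x)\preceq0$.

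\textbf{Case $\gamma\ge 1/\lambda_{\min}$.} Here $\gamma C-I_n\succeq0$, so $R(x)\succeq I_n$. Hence $0\prec W\preceq I_n$ and $Q:=I_n-W\succeq0$. Then $M(x)=-Q^{(2)}\preceq0$ by the same argument.

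\textbf{Domain and boundary issues.} This is the part needing the most care, rather than the algebra above. I must check that $\mathcal{X}:=\{x\in[0,1]^n: R(x)\succ0\}$ is convex.
\begin{itemize}
\item For $\gamma\ge1/\lambda_{\min}$ it is all of $[0,1]^n$, since $R(x)\succeq I_n$.
\item For $\gamma\le1/\lambda_{\max}$, the proof of Lemma~\ref{lem:nlp-natural-equiv-D-every-x} shows that $R(x)\succ0$ if and only if $A^\top\Diag(\mathbf{e}-x)A+B^\top B\succ0$. The latter matrix is affine in $x$, so its positive-definiteness region is convex.
\end{itemize}
The Hessian formula is only available for $x\in\mathbb{R}^n_{++}$. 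I would therefore first get concavity on $\mathcal{X}\cap\mathbb{R}^n_{++}$, which is convex and open relative to the box. I would then extend to points of $\mathcal{X}$ with zero coordinates by continuity of $f$ on $\mathcal{X}$: a continuous function that is concave on a dense convex subset of a convex set is concave on the whole set. Alternatively, concavity on all of $\mathcal{X}$ follows directly from Lemmas~\ref{lem:nlp-natural-equiv-D-every-x} and~\ref{lem:nlp-natural-equiv-F-every-x}, as $\ldet$ of an affine matrix function. The Hessian argument is the independent confirmation.
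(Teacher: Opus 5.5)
Your proof is correct and is essentially the paper's argument: both rewrite the middle matrix as $M=-(R(x)^{-1}-I_n)^{(2)}$ and reduce to sign-definiteness of $R(x)^{-1}-I_n$. You obtain that sign-definiteness from L\"owner monotonicity of inversion where the paper uses simultaneous diagonalization, and you make explicit the Schur product step and the extension to points with zero coordinates, both of which the paper leaves implicit. (Your domain discussion can be shortened: because $C\succ 0$, for $\gamma\le 1/\lambda_{\max}$ one also has $R(x)\succeq \gamma\lambda_{\min}I_n\succ 0$ on $[0,1]^n$, so the domain is the whole box in both cases.)
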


\begin{proof}
Let $M := 2\,\Diag(R(x)^{\scriptscriptstyle -1}) - I_n - (R(x)^{\scriptscriptstyle -1})^{\scriptscriptstyle (2)}$. Then
\[
\nabla^{\scriptscriptstyle 2} f(x) = \Diag(x)^{\scriptscriptstyle -1}\, M \, \Diag(x)^{\scriptscriptstyle -1}.
\]
Note that for all
$v\in\mathbb{R}^n$, we have
$v^\top \nabla^{\scriptscriptstyle 2} f(x)\, v = w^\top M w$, where  $w := \Diag(x)^{\scriptscriptstyle -1} v$,
and $v\mapsto w$ is a bijection of $\mathbb{R}^n$. So  it suffices to show that $M \preceq 0$. 
For $S\in \mathbb{S}^n$, we have
\begin{align*}
(S\!-\!I_n)\!\circ\!(S\!-\!I_n) &= S\!\circ\! S \!-\! 2(S\!\circ\! I_n) + I_n\!\circ\! I_n 
= S^{\scriptscriptstyle (2)} \!-\! 2\,\Diag(S) \!+\! I_n\,.
\end{align*}
Hence
$
M = -(R(x)^{\scriptscriptstyle -1}-I_n)^{\scriptscriptstyle (2)}.
$
Therefore, it suffices to show that either $R(x)^{\scriptscriptstyle -1}-I_n\succeq 0$ or $R(x)^{\scriptscriptstyle -1}-I_n\preceq 0$. If
\begin{align*}
&\gamma \!\in\! (0,1/\lambda_{\max}]\Rightarrow
\Diag(x)^{\scriptscriptstyle 1/2}(I_n \! -\!\gamma C)\Diag(x)^{\scriptscriptstyle 1/2}\succeq 0.\\
&\gamma \!\in\! [1/\lambda_{\min}\,, +\infty) \Rightarrow \Diag(x)^{\scriptscriptstyle 1/2}(I_n - \gamma C)\Diag(x)^{\scriptscriptstyle 1/2}\preceq 0.
\end{align*}

As $R(x)$ is an affine function of $\Diag(x)^{\scriptscriptstyle 1/2}(I_n - \gamma C)\Diag(x)^{\scriptscriptstyle 1/2}$, $R(x)$ and $\Diag(x)^{\scriptscriptstyle 1/2}(I_n - \gamma C)\Diag(x)^{\scriptscriptstyle 1/2}$ commute. Then, $R(x)^{\scriptscriptstyle -1}$ and $\Diag(x)^{\scriptscriptstyle 1/2}(I_n - \gamma C)\Diag(x)^{\scriptscriptstyle 1/2}$ commute as well, and both matrices are simultaneously 
diagonalizable: there is an
orthonormal basis $\{v_i\}$ and reals $\tau_i$ such that, for all $i$,
\begin{align*}
&\Diag(x)^{\scriptscriptstyle 1/2}(I_n - \gamma C)\Diag(x)^{\scriptscriptstyle 1/2}\, v_i = \tau_i v_i\,, \\
&~R(x)\, v_i = (1-\tau_i) v_i\,, \\
&~ R(x)^{\scriptscriptstyle -1}\, v_i = \textstyle\frac{1}{1-\tau_i}
v_i\,.
\end{align*}
Then, for all $i$, 
\[
\textstyle(R(x)^{\scriptscriptstyle -1} - I_n)v_i = \Big(\frac{1}{1-\tau_i} - 1\Big) v_i = \frac{\tau_i}{1-\tau_i}\, v_i\,.
\]
As $R(x)\succ 0$, we have $1-\tau_i > 0$ for all $i$. Consequently,
 the sign of $\frac{\tau_i}{1-\tau_i}$ equals the sign of $\tau_i$\,.
Therefore:
\begin{align*}
&\Diag(x)^{\scriptscriptstyle 1/2}(I_n \!-\! \gamma C)\Diag(x)^{\scriptscriptstyle 1/2} \succeq 0 \ (\text{all } \tau_i \ge 0) 
\Rightarrow R(x)^{\scriptscriptstyle -1} \!-\! I_n \succeq 0,\\
&\Diag(x)^{\scriptscriptstyle 1/2}(I_n \!-\! \gamma C)\Diag(x)^{\scriptscriptstyle 1/2} \preceq 0 \ (\text{all } \tau_i \le 0) 
\Rightarrow R(x)^{\scriptscriptstyle -1} \!-\! I_n \preceq 0.
\end{align*}
The result follows.
\qed
\end{proof}


\section{Investigating the forbidden region through the Hessian}\label{sec:forbidden}

A natural question arising from the preceding analysis concerns the behavior of the objective function in \ref{nlpid_bound} when
$\gamma$ is in the \emph{forbidden region}
$\textstyle \left(1/\lambda_{\max},1/\lambda_{\min}\right).
$

\subsection{Certifying bad $\gamma$}
In what follows, based on positive-definite $C$,
we give a set of intervals of  $\gamma$ for which the Hessian fails to be negative semidefinite at 
$x=\mathbf{e}$.
Consequently, the 
objective function of \ref{nlpid_bound} is not concave on at least one point in its domain, for such values of $\gamma$. 

\begin{theorem}\label{thm:badgamma}
Suppose that  $C\in \mathbb{S}^n_{++}$ and $\gamma>0$. 
    Let $R(x):=I_n +  \Diag(x)^{\scriptscriptstyle 1/2}(\gamma C-I_n)\Diag(x)^{\scriptscriptstyle 1/2}$ and $f(x):=\ldet(R(x))$, for $x \in\mathbb{R}^n_{++}$ such that $R(x) \succ 0$. Let $\hat{x}:=\mathbf{e}$. Then, $\nabla^{\scriptscriptstyle 2} f(\hat{x})\npreceq 0$, if for some $1\leq i < j\leq n$,     
\[
\gamma \ \in\ 
\begin{cases}
(\gamma_{\scriptscriptstyle L}, \gamma_{\scriptscriptstyle R}), & \mbox{if } (a-b)^{\scriptscriptstyle 2} < 4c^{\scriptscriptstyle 2}; \\ 
(\gamma_{\scriptscriptstyle L}, \gamma_{\scriptscriptstyle l}) \cup (\gamma_{\scriptscriptstyle r}, \gamma_{\scriptscriptstyle R}), & \mbox{if }(a-b)^{\scriptscriptstyle 2} \ge 4c^{\scriptscriptstyle 2},
\end{cases}
\]
where 
$
\gamma_{\scriptscriptstyle L,R} := \textstyle \frac{a+b \mp \Delta_1}{2}$,
$
\gamma_{\scriptscriptstyle l,r} := 
\textstyle \frac{a+b \mp \Delta_2}{2}$, 
$\Delta_1:=\sqrt{(a-b)^{\scriptscriptstyle 2}+4c^{\scriptscriptstyle 2}}$, $\Delta_2:=\allowbreak\sqrt{(a-b)^{\scriptscriptstyle 2}-4c^{\scriptscriptstyle 2}}$, 
$a := (C^{\scriptscriptstyle -1})_{ii}$\,, 
$b := (C^{\scriptscriptstyle -1})_{jj}$\,, 
$c := (C^{\scriptscriptstyle -1})_{ij}$\,.
\end{theorem}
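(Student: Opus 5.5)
The plan is to evaluate the Hessian formula of Lemma~\ref{lem:hes_f} at $\hat x=\mathbf{e}$ and then reduce the question to a single $2\times 2$ principal submatrix. First, $\hat x=\mathbf{e}\in\mathbb{R}^n_{++}$ and $R(\mathbf{e})=I_n+(\gamma C-I_n)=\gamma C\succ 0$, because $C\in\mathbb{S}^n_{++}$ and $\gamma>0$. So $\mathbf{e}$ lies in the domain and Lemma~\ref{lem:hes_f} applies. Since $\Diag(\mathbf{e})=I_n$, we get $\nabla^2 f(\mathbf{e})=M$. Using the Hadamard identity from the proof of Theorem~\ref{thm:concave}, this is
\[
\nabla^2 f(\mathbf{e})=-\left(R(\mathbf{e})^{-1}-I_n\right)^{(2)}=-\left(\tfrac{1}{\gamma}C^{-1}-I_n\right)^{(2)}.
\]
Hence $\nabla^2 f(\mathbf{e})\npreceq 0$ if and only if $P:=(\tfrac1\gamma C^{-1}-I_n)^{(2)}$ is not positive semidefinite.

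To certify that $P\not\succeq 0$, it suffices to exhibit one principal $2\times2$ submatrix that is not PSD. For indices $i<j$ this submatrix is
\[
\begin{pmatrix} (a/\gamma-1)^2 & c^2/\gamma^2\\ c^2/\gamma^2 & (b/\gamma-1)^2\end{pmatrix}.
\]
Its diagonal entries are nonnegative, so it fails to be PSD exactly when its determinant is negative. After multiplying by $\gamma^4>0$, this condition reads $(a-\gamma)^2(b-\gamma)^2<c^4$, which is equivalent to
\[
-c^2<(a-\gamma)(b-\gamma)<c^2 .
\]

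It remains to solve these two quadratic inequalities in $\gamma$. The right inequality is $\gamma^2-(a+b)\gamma+ab-c^2<0$. This quadratic has discriminant $(a-b)^2+4c^2=\Delta_1^2\ge 0$, so the inequality holds exactly for $\gamma\in(\gamma_L,\gamma_R)$. The left inequality is $\gamma^2-(a+b)\gamma+ab+c^2>0$, with discriminant $(a-b)^2-4c^2$. If $(a-b)^2<4c^2$, this quadratic has no real roots and is always positive, so the condition reduces to $(\gamma_L,\gamma_R)$. Otherwise the left inequality holds exactly off $[\gamma_l,\gamma_r]$. Since $\Delta_2\le\Delta_1$, we have $\gamma_L\le\gamma_l\le\gamma_r\le\gamma_R$, so the intersection is $(\gamma_L,\gamma_l)\cup(\gamma_r,\gamma_R)$.

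There is no real obstacle here. The only care needed is the bookkeeping for the degenerate case $c=0$, where both sets are empty and the statement is vacuous, and checking the ordering of the four roots.
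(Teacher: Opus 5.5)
Your proposal is correct and follows the paper's proof essentially step for step. You evaluate the Hessian at $\mathbf{e}$ as $-\bigl(\tfrac{1}{\gamma}C^{-1}-I_n\bigr)^{(2)}$, certify that it is not negative semidefinite via a negative $2\times 2$ principal minor, reduce this to $-c^2<(a-\gamma)(b-\gamma)<c^2$, and solve the two quadratic inequalities. You also check explicitly that $\mathbf{e}$ lies in the domain, that $\gamma_L\le\gamma_l\le\gamma_r\le\gamma_R$ (via $\Delta_2\le\Delta_1$), and that the case $c=0$ is vacuous; these are small points the paper leaves implicit.
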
 

\begin{proof}
We have
$R(\mathbf{e}) = \gamma C$, 
and
\[
\nabla^{\scriptscriptstyle 2} f(\mathbf{e}) = 2\,\operatorname{Diag}\!\left(\textstyle\frac{1}{\gamma}C^{\scriptscriptstyle -1}\right) - I_n - \left(\textstyle\frac{1}{\gamma}C^{\scriptscriptstyle -1}\right)^{\scriptscriptstyle (2)}= -\left(\textstyle\frac{1}{\gamma}C^{\scriptscriptstyle -1} - I_n\right)^{\scriptscriptstyle (2)}.
\]    
It is evident that if
$
\textstyle\frac{1}{\gamma}C^{\scriptscriptstyle -1} - I_n \succeq 0$ or $\textstyle\frac{1}{\gamma}C^{\scriptscriptstyle -1} - I_n \preceq 0$,
the Hessian $\nabla^{\scriptscriptstyle 2} f(\mathbf{e})$ is negative semidefinite. 

Let $A := \textstyle\frac{1}{\gamma}C^{\scriptscriptstyle -1} - I_n$ and $M := A^{\scriptscriptstyle (2)}$,
so that $\nabla^{\scriptscriptstyle 2} f(\mathbf{e}) = -M$. 
Consider the $2\times 2$ principal minor of $M$:
\[
\det
\begin{pmatrix}
M_{ii} & M_{ij} \\
M_{ij} & M_{jj}
\end{pmatrix}
= A_{ii}^{\scriptscriptstyle 2} A_{jj}^{\scriptscriptstyle 2} - A_{ij}^{\scriptscriptstyle 4}\,.
\]
If this determinant is negative, $M$ cannot be positive semidefinite, and therefore $\nabla^{\scriptscriptstyle 2} f(\mathbf{e})$ is not negative semidefinite. This occurs precisely when
$
A_{ij}^{\scriptscriptstyle 2} > |A_{ii}|\,|A_{jj}|.
$

Because  $A_{ii} = (\textstyle\frac{1}{\gamma}C^{\scriptscriptstyle -1})_{ii} - 1$ and $A_{ij} = (\textstyle\frac{1}{\gamma}C^{\scriptscriptstyle -1})_{ij}$\,,
the condition becomes
\[
\textstyle\left(\textstyle\frac{1}{\gamma}C^{\scriptscriptstyle -1}\right)_{ij}^{\scriptscriptstyle 2} \;>\; \textstyle\left| \left(\textstyle\frac{1}{\gamma}C^{\scriptscriptstyle -1}\right)_{ii\vphantom{j}} - 1 \right| \cdot \textstyle\left| \textstyle\left(\textstyle\frac{1}{\gamma}C^{\scriptscriptstyle -1}\right)_{jj} - 1 \textstyle\right|,
\]
which reduces to
$c^{\scriptscriptstyle 2} > |a-\gamma|\,|b-\gamma| = |(a-\gamma)(b-\gamma)|$,
i.e., the two-sided quadratic inequality
\[
-c^{\scriptscriptstyle 2} \;<\; (a-\gamma)(b-\gamma) \;<\; c^{\scriptscriptstyle 2}.
\]

For the right-hand inequality, which
can be written as 
the convex inequality $\gamma^{\scriptscriptstyle 2} - (a+b)\gamma + (ab - c^{\scriptscriptstyle 2}) < 0$, we have:
\[
\textstyle\gamma \in (\gamma_{\scriptscriptstyle L}, \gamma_{\scriptscriptstyle R}), \quad
\gamma_{\scriptscriptstyle L,R} = \frac{(a+b) \mp \Delta_1}{2}, \quad
\Delta_1 := \sqrt{(a-b)^{\scriptscriptstyle 2} + 4c^{\scriptscriptstyle 2}}. 
\]

The left-hand inequality 
can be written as
the nonconvex inequality $\gamma^{\scriptscriptstyle 2} - (a+b)\gamma + (ab+c^{\scriptscriptstyle 2}) > 0$, with discriminant $\Delta_2^{\scriptscriptstyle 2} := (a-b)^{\scriptscriptstyle 2} - 4c^{\scriptscriptstyle 2}$. Then,
\begin{itemize}
    \item if $(a-b)^{\scriptscriptstyle 2} < 4c^{\scriptscriptstyle 2}$: $\Delta_2^{\scriptscriptstyle 2} < 0$, there are no real roots, and the inequality holds for all $\gamma$.
    \item if $(a-b)^{\scriptscriptstyle 2} \ge 4c^{\scriptscriptstyle 2}$: we have roots $\gamma_{l,r} = \textstyle\frac{(a+b)\mp\Delta_2}{2}$, $\Delta_2 := \sqrt{(a-b)^{\scriptscriptstyle 2} - 4c^{\scriptscriptstyle 2}}$, and the inequality holds for $\gamma \notin [\gamma_{\scriptscriptstyle l}, \gamma_{\scriptscriptstyle r}]$.
\end{itemize}
The result follows.
\qed
\end{proof}

With the following $n=2$ example, we verify that the interval from the first case of Theorem 
\ref{thm:badgamma} can be nonempty. 

\begin{example}
Let
$C :=\left(\begin{smallmatrix} 2 & 1 \\ 1 & 2 \end{smallmatrix}\right)$. We have $\lambda_{\min} = 1$, $\lambda_{\max} = 3$, and
$C^{\scriptscriptstyle -1} = \textstyle \frac{1}{3}\left(\begin{smallmatrix} 2 & -1 \\ -1 & 2 \end{smallmatrix}\right)$,
so $a = b = \frac{2}{3}$, $c = -\frac{1}{3}$.
As $(a-b)^{\scriptscriptstyle 2} = 0 < 4c^{\scriptscriptstyle 2} = \frac{4}{9}$,  we conclude that $\nabla^{\scriptscriptstyle 2}f(\mathbf{e})\npreceq 0$ for every $\gamma\in(\gamma_{\scriptscriptstyle L},\gamma_{\scriptscriptstyle R})= \left(\frac{1}{3},1\right)$.
This interval coincides exactly with the range of $\gamma$ for which $\frac{1}{\gamma}C^{\scriptscriptstyle -1}-I_n$ is indefinite. Thus, in this simple example, this necessary condition is also sufficient to guarantee the failure of $\nabla^{\scriptscriptstyle 2}f(\mathbf{e})\preceq 0$. 
\hfill $\clubsuit$
\end{example}

With the following $n=3$ example, we verify that the two intervals from the second case of Theorem 
\ref{thm:badgamma} can be nonempty, and we numerically check that there is an interval of $\gamma$ between these two intervals, where the Hessian is negative semidefinite on all of $(0,1]^3$.

\begin{example}\label{ex:three}
Consider
\[
C:=\left(\begin{smallmatrix} 2 & 1 & 0 \\ 1 & 2 & 1 \\ 0 & 1 & 6 \end{smallmatrix}\right) ~\mbox{ and }~\textstyle
C^{\scriptscriptstyle -1} = \left(\begin{smallmatrix}
11/16 & -3/8 & 1/16 \\
-3/8 & 3/4 & -1/8 \\
1/16 & -1/8 & 3/16
\end{smallmatrix}\right).
\]
 
\paragraph{Pair $(i,j)=(1,2)$:} $a = \tfrac{11}{16}$, $b = \tfrac{3}{4}$, $c = -\tfrac{3}{8}$.
\[
\begin{array}{c}
\textstyle
(a-b)^{\scriptscriptstyle 2} - 4c^{\scriptscriptstyle 2} = -\frac{143}{256} < 0 \Rightarrow  \text{first case (single interval).}\\[3pt] 
\textstyle
\gamma_{\scriptscriptstyle L} = \frac{23-\sqrt{145}}{32} \approx 0.34245, \quad
\gamma_{\scriptscriptstyle R} = \frac{23+\sqrt{145}}{32} \approx 1.09505.
\end{array}
\]
 
\paragraph{Pair $(i,j)=(1,3)$:} $a = \tfrac{11}{16}$, $b = \tfrac{3}{16}$, $c = \tfrac{1}{16}$.
\[
\begin{array}{c}
\textstyle
(a-b)^{\scriptscriptstyle 2} - 4c^{\scriptscriptstyle 2} = \frac{15}{64} > 0 \Rightarrow  \text{second case (two intervals).}\\[3pt]
\textstyle
\gamma_{\scriptscriptstyle L} = \frac{7-\sqrt{17}}{16} \approx 0.17981, \quad
\gamma_{\scriptscriptstyle l} = \frac{7-\sqrt{15}}{16} \approx 0.19544,\\[3pt]
\textstyle
\gamma_{\scriptscriptstyle r} = \frac{7+\sqrt{15}}{16} \approx 0.67956, \quad
\gamma_{\scriptscriptstyle R} = \frac{7+\sqrt{17}}{16} \approx 0.69519.
\end{array}
\]
 
\paragraph{Pair $(i,j)=(2,3)$:} $a = \tfrac{3}{4}$, $b = \tfrac{3}{16}$, $c = -\tfrac{1}{8}$.
\[
\begin{array}{c}
\textstyle
(a-b)^{\scriptscriptstyle 2} - 4c^{\scriptscriptstyle 2} = \frac{65}{256} > 0 \Rightarrow  \text{second case (two intervals).}\\[3pt]
\textstyle
\gamma_{\scriptscriptstyle L} = \frac{15-\sqrt{97}}{32} \approx 0.16097, \quad
\gamma_{\scriptscriptstyle l} = \frac{15-\sqrt{65}}{32} \approx 0.21680,\\[3pt]
\textstyle
\gamma_{\scriptscriptstyle r} = \frac{15+\sqrt{65}}{32} \approx 0.72070, \quad
\gamma_{\scriptscriptstyle R} = \frac{15+\sqrt{97}}{32} \approx 0.77653.
\end{array}
\]
 The forbidden region is $(1/\lambda_{\max}\,, 1/\lambda_{\min}) \approx (0.1600,\allowbreak 1.1145)$. The union of the five certified-bad intervals from the three pairs 
simplifies to
$
(0.16097,\,\allowbreak 0.21680) \,\cup\, (0.34245,\,\allowbreak 1.09505)$.
Additionally, 
We have obtained a lot of numerical evidence that the true region of $\gamma$ for which 
the Hessian is not negative semidefinite on all of $(0,1]^3$
is approximately 
$
(0.16029,\,\allowbreak 0.23825) \,\cup\, (0.31263,\,\allowbreak 1.09574).
$
We can see that Theorem
\ref{thm:badgamma} is certifying over $93\%$ of the ``true forbidden region''.
\hfill $\clubsuit$
\end{example}

\begin{remark}
Considering Theorem 
\ref{thm:badgamma}, we could also ask for conditions under which 
the objective function of \ref{nlpid_bound} is
not concave at a point (e.g., $(s/n)\mathbf{e}$)
in the relative interior of its feasible region.
For this, we would have to analyze a 4-th degree polynomial, and we would not get a clean result like Theorem \ref{thm:badgamma}, although we would still get computable intervals. Finally, we could try to go further and work with the Hessian 
of the objective function written
with respect to a basis for the reduced $(n\!-\!1)$-dimensional affine space $\{x\in \mathbb{R}^n \,:\, \mathbf{e}^\top x = s\}$, but this gets even messier.
\end{remark}


\subsection{Certifying good $\gamma$}

Next, through a stylized family of examples, we demonstrate that it is possible to certify that the objective function of the \ref{nlpid_bound} is concave and non-decreasing in $\gamma$, at some values of $\gamma$ in the forbidden region.
In the following two theorems,
$f(\cdot)$ and $R(\cdot)$
are as defined in Theorem \ref{thm:concave}, and 
$h(\cdot)$ and $\Theta(\cdot)$
are as defined in the proof of Lemma 
\ref{lem:natural_to_nlp_modified},
but with respect to the particular matrix $C$ that we will define. 

\begin{theorem}\label{thm:good}
For $n\ge 1$ and $0\!<\!a\!<
\!\tfrac13$,
let  $C:=(1-a)I_n+a\,\mathbf{e}\mathbf{e}^{\top}$ and 
$
\textstyle \gamma \in \left(0,\ \frac{1-3a}{(1-a)^{\scriptscriptstyle 2}}\right]$. 
Then,  $\nabla^{\scriptscriptstyle 2} f(x)\preceq 0$ for all $0\leq x\leq \mathbf{e}$.  Moreover, 
if
$a<\frac{n-2}{3n-2}$ (which implies that $n\geq 3$),
then the interval properly contains 
$(0,1/\lambda_{\max}]$\,.
\end{theorem}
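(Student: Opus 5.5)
The plan is to compute the Hessian in closed form, using Lemma~\ref{lem:hes_f} together with the identity already used in the proof of Theorem~\ref{thm:concave}:
\[
\nabla^{\scriptscriptstyle 2} f(x)=-\Diag(x)^{\scriptscriptstyle -1}\,S^{\scriptscriptstyle (2)}\,\Diag(x)^{\scriptscriptstyle -1},\qquad S:=R(x)^{\scriptscriptstyle -1}-I_n .
\]
It then suffices to show $S^{\scriptscriptstyle (2)}\succeq 0$ for every $x\in(0,1]^n$. Unlike in Theorem~\ref{thm:concave}, $S$ need not be semidefinite here. Instead, I will exploit the rank-one structure of $C$ and write $S$ as a nonnegative diagonal matrix minus a rank-one term.

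\textbf{Structure of $R(x)$ and $S$.} Set $\delta:=1-\gamma(1-a)$ and $d:=\diag(\Diag(x)^{\scriptscriptstyle 1/2})$. Since $\gamma C-I_n=-\delta I_n+\gamma a\,\mathbf{e}\mathbf{e}^\top$, we get
\[
R(x)=\Diag(\mathbf{e}-\delta x)+\gamma a\, dd^\top .
\]
The hypothesis $\gamma\le (1-3a)/(1-a)^2$ gives $\gamma(1-a)\le (1-3a)/(1-a)<1$, so $\delta\in(0,1)$. Hence $E:=\Diag(\mathbf{e}-\delta x)^{\scriptscriptstyle -1}\succ 0$ and $R(x)\succ 0$ throughout $[0,1]^n$, so no domain issues arise. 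By Sherman--Morrison,
\[
R(x)^{\scriptscriptstyle -1}=E-\beta uu^\top,\qquad u:=Ed,\qquad \beta:=\frac{\gamma a}{1+\gamma a\, d^\top E d}\in(0,\gamma a].
\]
Therefore $S=D-\beta uu^\top$ with $D:=E-I_n=\Diag\bigl(\delta x_i/(1-\delta x_i)\bigr)\succeq 0$.

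\textbf{Hadamard square.} Expanding,
\[
S^{\scriptscriptstyle (2)}=D^2-2\beta\,\Diag(D_{ii}u_i^2)+\beta^2 (u\circ u)(u\circ u)^\top .
\]
The last term is PSD. So it suffices that $D_{ii}\ge 2\beta u_i^2$ for each $i$, i.e.,
\[
\frac{\delta x_i}{1-\delta x_i}\ \ge\ 2\beta\,\frac{x_i}{(1-\delta x_i)^2},
\quad\text{equivalently}\quad \delta(1-\delta x_i)\ge 2\beta .
\]
Using $\beta\le\gamma a$ and the worst case $x_i=1$, it is enough to have $\delta(1-\delta)\ge 2\gamma a$. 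Substituting $\delta$ gives $(1-\gamma(1-a))\gamma(1-a)\ge 2\gamma a$, which after dividing by $\gamma>0$ is exactly $\gamma\le (1-3a)/(1-a)^2$. This gives $\nabla^{\scriptscriptstyle 2}f(x)\preceq 0$ on $(0,1]^n$. Points with some $x_i=0$ are handled by continuity, or by interpreting concavity on the closure as in Theorem~\ref{thm:concave}.

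\textbf{Comparison with $(0,1/\lambda_{\max}]$.} Here $\lambda_{\max}=1+(n-1)a$. The strict inequality
\[
(1-3a)(1+(n-1)a)>(1-a)^2
\]
expands to $(n-2)a>(3n-2)a^2$, i.e., $a<(n-2)/(3n-2)$. This is positive only when $n\ge3$. Under it, $(1-3a)/(1-a)^2>1/\lambda_{\max}$, so the new interval properly contains $(0,1/\lambda_{\max}]$.

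The main obstacle is seeing that the Hadamard square is PSD even though $S$ is indefinite. The diagonal-minus-rank-one decomposition, combined with the diagonal-dominance inequality $\delta(1-\delta x_i)\ge 2\beta$, is the step I expect to carry the argument. The crude bound $\beta\le\gamma a$ happens to reproduce the stated threshold exactly.
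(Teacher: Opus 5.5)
Your proof is correct and follows essentially the same route as the paper. Both arguments apply Sherman--Morrison to the diagonal-plus-rank-one $R(x)$ and split $(R(x)^{-1}-I_n)^{(2)}$ into a PSD rank-one term plus a diagonal with entries $(d_i-1)(d_i-1-2p_i)$, which is your $D_{ii}(D_{ii}-2\beta u_i^2)$; then $\beta\le\gamma a$ and the worst case $x_i=1$ give exactly $\gamma\le(1-3a)/(1-a)^2$, the comparison with $1/\lambda_{\max}$ is identical, and your continuity remark for points with some $x_i=0$ makes explicit what the paper leaves implicit.
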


\begin{proof}
Define
$
b:=\gamma(1-a)-1$ and $\kappa:=\gamma a$. Note that $b<0$ and $\kappa>0$. 

We have $\gamma C - I_n = bI_n + \kappa\,\mathbf{e}\mathbf{e}^{\top}$ and $R(x)=I_n+b\Diag(x)+\kappa\,x^{\scriptscriptstyle 1/2}(x^{\scriptscriptstyle 1/2})^{\top}$. As  $1+b=\gamma(1-a)>0$, we have $1+bx_i\ge 1+b>0$ for all $x_i\in(0,1]$. Hence, $R(x)\succ0$ for all $0<x\leq \mathbf{e}$. By the Sherman--Morrison formula,  $R(x)^{\scriptsize -1}= \Diag(d) - \tau ww^\top$, where,  
$
d_i:=1/(1+bx_i)$ and $w_i:=d_i(x_i)^{\scriptsize -1/2}$, for $i\in N$, and $\tau:=\kappa/(1+\kappa\sigma)>0$ and $\sigma:=\sum_{i\in N} d_ix_i$\,.

Moreover,  $(R(x)^{\scriptsize -1} -I_n)_{ii}=d_i -1 -\tau w_i^2$ and $(R(x)^{\scriptsize -1})_{ij}= -\tau w_iw_j$ $(i\neq j)$. Let $p_i:=\tau w_i^{\scriptsize 2}\geq 0$ and define  
$
M := -(R(x)^{\scriptscriptstyle -1}-I_n)^{\scriptscriptstyle (2)}=
-(pp^\top +\Diag(q))$, where $q_i:=(d_i-1-p_i)^{\scriptsize 2} - p_i^{\scriptsize 2}$.  

From Lemma \ref{lem:hes_f}, we have that a sufficient condition for $\nabla^{\scriptsize 2}f(x)\preceq 0$ is $M\preceq0$. Moreover, a sufficient condition for  $M\preceq0$ is
$q_i\geq0$ for every $i\in N$.
Equivalently,
$|d_i-1-p_i|\geq p_i$\,,
which holds if
$d_i\leq1$ or $d_i\geq1+2p_i$\,.

As $b<0$ and $x_i>0$, we have $d_i=1/(1+bx_i)>1$, so the first alternative cannot occur. It therefore suffices to show that
 $d_i\ge 1+2p_i$ for every $i$. Using $d_i-1=|b|x_id_i$ and $p_i=\tau d_i^2x_i$\,, this condition is equivalent, after dividing by $x_id_i>0$, to
$d_i\le |b|/2\tau$, for every  $i$.

Now, for every $0<x\leq \mathbf{e}$, we have 
$\tau\le\kappa$ and 
$d_i\le d^*:=\frac{1}{1+b}=\frac{1}{\gamma(1-a)}$. 
Hence, $\tau d_i\leq \kappa\,d^*$. Finally, 
$
 \kappa\,d^*\le \tfrac{|b|}{2}
\Leftrightarrow 2\kappa\le |b|(1+b)
\Leftrightarrow 2\gamma a \le (1-\gamma(1-a))\,\gamma(1-a) \Leftrightarrow \textstyle \gamma \ \le\textstyle \frac{1-3a}{(1-a)^2}\,.
$

For the moreover part, we have  
$1/\lambda_{\max}=\textstyle \frac{1}{1+(n-1)a}$, and it can be checked that $a<\frac{n-2}{3n-2}$ is equivalent to 
$
\textstyle \frac{1}{1+(n-1)a} < \frac{1-3a}{(1-a)^{\scriptscriptstyle 2}}$\,.
\qed
\end{proof}

\begin{theorem}\label{thm:nonincreasingex}
For $n> 1$, $0\!<\!a\!<\!\tfrac13$,
let  $C:=(1-a)I_n+a\,\mathbf{e}\mathbf{e}^{\top}$ and $0<s<n$.
Then,
 the {\rm\ref{nlpid_bound}}  bound is non-increasing in $\gamma$ on $\left(0,\ \frac{1-2a}{(1-a)^{\scriptscriptstyle 2}}\right]$.
 Moreover, for such a $\gamma$ in that interval for which some optimal solution of  
 {\rm\ref{nlpid_bound}} is not in $\{0,1\}^n$, the  {\rm\ref{nlpid_bound}} bound is decreasing at $\gamma$. 
\end{theorem}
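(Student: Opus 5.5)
The plan is to run the argument from the proof of Lemma~\ref{lem:natural_to_nlp_modified} on the structured matrix $C=(1-a)I_n+a\,\mathbf{e}\mathbf{e}^\top$. Fix a feasible $\hat x$ (with $0\le\hat x\le\mathbf{e}$ and $\mathbf{e}^\top\hat x=s$) and consider $h(\gamma)=\ldet(\Theta(\gamma))-s\log\gamma$. By \eqref{derivativeh},
\[
h'(\gamma)=\gamma^{-1}\textstyle\sum_{j\in N}(1-\hat x_j)\bigl(1-(\Theta(\gamma)^{-1})_{jj}\bigr).
\]
So it suffices to show that $(\Theta(\gamma)^{-1})_{jj}\ge 1$ for every $j$ whenever $\gamma\le (1-2a)/(1-a)^2$. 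Since the bound is a pointwise maximum of such $h$ over the fixed feasible set, it is then non-increasing on that interval.

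\textbf{Setup.} As in the proof of Theorem~\ref{thm:good}, set $b:=\gamma(1-a)-1$ and $\kappa:=\gamma a>0$, so that
\[
\Theta(\gamma)=R(\hat x)=I_n+b\Diag(\hat x)+\kappa\,\hat x^{1/2}(\hat x^{1/2})^\top .
\]
We have $b<0$ on the interval, because $(1-2a)/(1-a)^2<1/(1-a)$. We also have $1+b\hat x_i\ge 1+b=\gamma(1-a)>0$ for every $i$, including those with $\hat x_i=0$. Hence $\Theta\succ0$ and $h$ is finite for every feasible $\hat x$, so no finiteness caveat is needed.

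\textbf{Diagonal of the inverse.} Sherman--Morrison with $d_j=1/(1+b\hat x_j)$, $\sigma=\sum_i d_i\hat x_i$ and $\tau=\kappa/(1+\kappa\sigma)$ gives
\[
(\Theta^{-1})_{jj}=d_j-\tau d_j^2\hat x_j .
\]
Using $d_j-1=|b|\hat x_jd_j$, the condition $(\Theta^{-1})_{jj}\ge1$ becomes $\hat x_jd_j\,(|b|-\tau d_j)\ge0$, i.e.\ $\tau d_j\le|b|$ whenever $\hat x_j>0$. For $\hat x_j=0$ the entry is exactly $1$.

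\textbf{Reduction to the stated interval.} Bound $\tau\le\kappa$ and $d_j\le 1/(1+b)$. It then suffices that $\kappa\le|b|(1+b)$, which reads
\[
\gamma a\le\bigl(1-\gamma(1-a)\bigr)\gamma(1-a)\iff\gamma\le\frac{1-2a}{(1-a)^2}.
\]
This is the same computation as in Theorem~\ref{thm:good}, but without the factor $2$ that appeared there from the Hessian condition. This step, namely identifying that monotonicity needs only $\tau d_j\le|b|$ rather than $2\tau d_j\le|b|$, is the crux. Once it is seen, the rest is routine.

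\textbf{Strictness.} Let $\hat x$ be an optimal solution at $\gamma$ that is not in $\{0,1\}^n$, so some $\hat x_j\in(0,1)$. Since $s>0$ we have $\sigma>0$, hence $\tau<\kappa$ strictly. Then $\tau d_j<|b|$ and $\hat x_jd_j>0$, so that term of $h'(\gamma)$ is strictly negative while the others are $\le0$, giving $h'(\gamma)<0$. Consequently, for $\gamma'<\gamma$ close to $\gamma$,
\[
z_{\text{NLP-Id}(\gamma')}\ge h_{\gamma'}(\hat x)>h_\gamma(\hat x)=z_{\text{NLP-Id}(\gamma)}.
\]
Combined with monotonicity on the whole interval, this shows the bound is strictly decreasing at $\gamma$. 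The one point needing care is this last step: I will phrase ``decreasing at $\gamma$'' via this left-sided strict inequality, using the fixed maximizer $\hat x$.
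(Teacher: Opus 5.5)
Your proposal is correct and follows the paper's proof essentially step for step. Both use the derivative formula \eqref{derivativeh} and the Sherman--Morrison expression $(\Theta(\gamma)^{-1})_{jj}=d_j-\tau d_j^2\hat x_j$, then bound $\tau\le\kappa$ and $d_j\le 1/(1+b)$ to reduce everything to $\kappa\le |b|(1+b)$, i.e., $\gamma\le (1-2a)/(1-a)^2$. Your pointwise-maximum argument for monotonicity and your explicit left-sided reading of ``decreasing at $\gamma$'' are somewhat more careful than the paper's one-line ``consider $\hat x$ as an optimal solution''; that left-sided reading is exactly what the hypothesis that \emph{some} optimal solution is fractional can deliver.
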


\begin{proof}
Let   $0\leq \hat x\leq \mathbf{e}$, with $\mathbf{e}^\top \hat{x}=s$. 
From \eqref{derivativeh}, we see that it suffices to prove that 
\begin{equation*} 
      \textstyle\sum_{i \in N} (1-\hat{x}_i)  (I_n-\Theta(\gamma)^{\scriptscriptstyle -1})_{ii}\leq 0\,.
\end{equation*}
 We note that  for  fixed $\hat x$ and $\gamma$, $\Theta(\gamma)=R(\hat x)$. Then, in the remainder of this proof, we use the same definitions and similar arguments to those in Theorem \ref{thm:good}. Particularly, using the same arguments used to prove that $R(x)\succ 0$ in Theorem \ref{thm:good}, we can verify that   $\Theta(\gamma)\succ 0$, for all $\gamma \in \left(0,\ \frac{1-2a}{(1-a)^{\scriptscriptstyle 2}}\right]$. Moreover,  
$(\Theta(\gamma)^{\scriptsize -1} -I_n)_{ii}=d_i -1 -\tau d_i^2\hat x_i$\,, where  $d_i:=1/(1+b\hat{x}_i)$, for $i\in N$, and $b:=\gamma(1-a)-1<0$, $\tau:=\kappa/(1+\kappa\sigma)>0$,  $\kappa:=\gamma a$, and $\sigma:=\sum_{i\in N} d_i\hat{x}_i$\,.
If $\hat{x}_i=1$, then $1-\hat x_i=0$ (trivially), and if $\hat{x}_i=0$, we can verify that  $(I_n-\Theta(\gamma)^{\scriptscriptstyle -1})_{ii}=0$. So, in the following we assume $0<\hat x_i<1$.
 In such a case, we will prove that   
$(I_n-\Theta(\gamma)^{\scriptsize -1})_{ii}< 0$. Equivalently, we will prove that $d_i-1> \tau d_i^2\hat x_i$\,. Exactly as in the proof of Theorem \ref{thm:good},
$d_i-1=-b\hat x_id_i$\,. 
Then,
$
d_i-1> \tau d_i^2\hat x_i  \Leftrightarrow d_i<-b/\tau.
$
We have that  $\tau< \kappa$ and
$d_i< d^*:=1/(1+b)$ for all $0\leq \hat x\leq \mathbf{e}$. So, $\tau d_i<\kappa d^*$, and it suffices to have
$\kappa\,d^*\le -b \Leftrightarrow \kappa\le -b(1+b)$.
Substituting $\kappa=\gamma a$, $b=\gamma(1-a)-1$ and dividing by $\gamma>0$, we obtain
$a\le (1-a)-\gamma(1-a)^2
\Leftrightarrow
\gamma\ \le\ \textstyle\frac{1-2a}{(1-a)^2}\,$.
Finally, consider $\hat x$ as an optimal solution to \ref{nlpid_bound} for any given $\gamma\in\left(0,\ \frac{1-2a}{(1-a)^{\scriptscriptstyle 2}}\right]$. The result follows.  \qed   
\end{proof}

\begin{remark}
From Theorems \ref{thm:good} and \ref{thm:nonincreasingex}, we can see that for
    the interval $\left(\frac{1-3a}{(1-a)^{\scriptscriptstyle 2}},\ \frac{1-2a}{(1-a)^{\scriptscriptstyle 2}}\right]$, we have not established concavity of the \ref{nlpid_bound} objective function, but we have established monotonicity in $\gamma$. So, in this range, the \ref{nlpid_bound} offers an improved bound, but we cannot calculate it via convex programming. 
\end{remark}

\subsection{AR(1) correlation matrices}

Next, we aim to demonstrate numerically, with a small ($n=10$) example coming from a simple autoregressive stochastic process, that some values of $\gamma$ within the forbidden region
can yield a convex optimization problem, and  for such values of $\gamma$, the resulting \ref{nlpid_bound} bound can be strictly better than the bounds obtained by setting $\gamma=1/\lambda_{\max}$ or $\gamma=1/\lambda_{\min}$\,,
the best possible values outside of the forbidden region.

We begin, for $n\geq 2$, by
considering the general ``AR(1) correlation
matrix''\footnote{For all such matrices, the inverse has a tridiagonal closed form (see \cite{Kac1953KMS}), and therefore the associated \ref{MESP} instances can be solved in $\mathcal{O}(n^5)$ time (see \cite{ALTHANI2023120}).} (which is a type of symmetric Toeplitz matrix) $C$ defined by 
$C_{ij}=\rho^{|i-j|}$ for $i,j=1,\ldots,n$, $0<|\rho|<1$.
Writing $\delta:=1-\rho^{\scriptscriptstyle 2}$, the inverse is tridiagonal (see \cite{Kac1953KMS}):
\[
(C^{\scriptscriptstyle -1})_{ii}=
\begin{cases}
1/\delta, & i\in\{1,n\},\\
(1+\rho^{\scriptscriptstyle 2})/\delta, & 1<i<n,
\end{cases}
\qquad
(C^{\scriptscriptstyle -1})_{i,i+1}=-\rho/\delta.
\]
The forbidden region $
\textstyle\left(1/\lambda_{\max}\,,1/\lambda_{\min}\right)$ can be calculated easily: (1) Find the least and greatest of the $n$ distinct roots of $\tan(n\theta) = -\frac{(1-\rho^{\scriptscriptstyle 2})\sin(\theta)}{(1+\rho^{\scriptscriptstyle 2})\cos(\theta) - 2\rho}$.
(2) Plug those two roots into $\lambda \!:=\! \frac{1 - \rho^{\scriptscriptstyle 2}}{1 - 2\rho \cos(\theta) + \rho^{\scriptscriptstyle 2}}$. 
Generally, we have
$
\textstyle\frac{1 - |\rho|}{1 + |\rho|} \!<\! \lambda_{\min} \!<\! \lambda_{\max} \!<\! \textstyle\frac{1 + |\rho|}{1 - |\rho|},
$
with $\lambda_{\min}$ (resp., $\lambda_{\max}$)
converging to its lower (resp., upper) bound as $n\rightarrow \infty$.

Next, we will see what subregion of the forbidden region can be certified by Theorem \ref{thm:badgamma}
as having the Hessian $\nabla^{\scriptscriptstyle 2} f(\mathbf{e})$ being not negative semidefinite. 

\begin{corollary}\label{cor:ar1} 
For the AR(1) correlation matrix $C$ with $n\!\geq\! 4$, the Hessian $\nabla^{\scriptscriptstyle 2} f(\mathbf{e})$ is not negative semidefinite on
$
\textstyle\left( \frac{(2+\rho^{\scriptscriptstyle 2})-|\rho|\sqrt{\rho^{\scriptscriptstyle 2}+4}}{2(1-\rho^{\scriptscriptstyle 2})},
\frac{1+|\rho|+\rho^{\scriptscriptstyle 2}}{1-\rho^{\scriptscriptstyle 2}}
\right),
$
independent of $n$.
\end{corollary}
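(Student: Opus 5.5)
The plan is to apply Theorem~\ref{thm:badgamma} to two specific index pairs of the tridiagonal inverse $C^{\scriptscriptstyle -1}$ and show that the two certified intervals overlap, so their union is the stated interval. Since Theorem~\ref{thm:badgamma} says $\nabla^{\scriptscriptstyle 2} f(\mathbf{e})\npreceq 0$ whenever $\gamma$ lies in the certified set for \emph{some} pair $(i,j)$, it suffices to exhibit pairs whose certified sets cover the interval. The only entries of $C^{\scriptscriptstyle -1}$ we need are those given above, and none of them depends on $n$. This is what yields independence of $n$, provided the chosen pairs exist, which requires $n\ge 4$.

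\emph{Step 1 (interior adjacent pair).} Take $(i,i+1)$ with $1<i$ and $i+1<n$; for instance $(2,3)$, which is available precisely because $n\ge 4$. Then $a=b=(1+\rho^{\scriptscriptstyle 2})/\delta$ and $c=-\rho/\delta$. Since $(a-b)^{\scriptscriptstyle 2}=0<4c^{\scriptscriptstyle 2}$, we are in the first case, with $\Delta_1=2|\rho|/\delta$. This gives the interval
\[
\Big(\tfrac{1-|\rho|+\rho^{\scriptscriptstyle 2}}{\delta},\ \tfrac{1+|\rho|+\rho^{\scriptscriptstyle 2}}{\delta}\Big),
\]
which already supplies the claimed right endpoint.

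\emph{Step 2 (boundary pair $(1,2)$).} Here $a=1/\delta$, $b=(1+\rho^{\scriptscriptstyle 2})/\delta$ and $c=-\rho/\delta$. We get $(a-b)^{\scriptscriptstyle 2}=\rho^{\scriptscriptstyle 4}/\delta^{\scriptscriptstyle 2}<4\rho^{\scriptscriptstyle 2}/\delta^{\scriptscriptstyle 2}=4c^{\scriptscriptstyle 2}$, because $\rho^{\scriptscriptstyle 2}<1$, so again the first case applies. Then $\Delta_1=|\rho|\sqrt{\rho^{\scriptscriptstyle 2}+4}/\delta$, and the certified interval is
\[
\Big(\tfrac{2+\rho^{\scriptscriptstyle 2}-|\rho|\sqrt{\rho^{\scriptscriptstyle 2}+4}}{2\delta},\ \tfrac{2+\rho^{\scriptscriptstyle 2}+|\rho|\sqrt{\rho^{\scriptscriptstyle 2}+4}}{2\delta}\Big),
\]
which supplies the claimed left endpoint.

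\emph{Step 3 (gluing).} This is the only step with any content: showing that the union of the two intervals is exactly the stated interval. Multiplying through by $2\delta>0$, this reduces to three elementary inequalities.
\begin{itemize}
\item The left endpoint of the pair-$(1,2)$ interval lies below that of the interior-pair interval. The difference is $|\rho|\,(2-|\rho|-\sqrt{\rho^{\scriptscriptstyle 2}+4})/2<0$, since $\sqrt{\rho^{\scriptscriptstyle 2}+4}>2$.
\item The two intervals overlap. The midpoint $(2+\rho^{\scriptscriptstyle 2})/(2\delta)$ of the pair-$(1,2)$ interval exceeds $(1-|\rho|+\rho^{\scriptscriptstyle 2})/\delta$, by $|\rho|(2-|\rho|)/(2\delta)>0$.
\item The right endpoint of the pair-$(1,2)$ interval does not exceed $(1+|\rho|+\rho^{\scriptscriptstyle 2})/\delta$. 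The difference is $|\rho|(\sqrt{\rho^{\scriptscriptstyle 2}+4}-2-|\rho|)/(2\delta)<0$, since $\sqrt{\rho^{\scriptscriptstyle 2}+4}<2+|\rho|$.
\end{itemize}
Hence the union is exactly the stated interval, and the corollary follows from Theorem~\ref{thm:badgamma}.
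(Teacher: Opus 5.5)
Your proposal is correct and follows the paper's proof essentially verbatim: you use the same pairs $(2,3)$ and $(1,2)$, both in the single-interval case of Theorem~\ref{thm:badgamma}, and you glue them via the chain $\gamma_L^{1,2}<\gamma_L^{2,3}<\gamma_R^{1,2}<\gamma_R^{2,3}$ (your midpoint comparison is just a slightly stronger form of the paper's middle inequality). The only blemish is cosmetic: in Step~3 the first difference is written with a factor $1/2$ while the others carry $1/(2\delta)$, which changes no signs.
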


\begin{proof}
We confine our attention to the pairs $(i,j)=(2,3)$  and $(i,j)=(1,2)$; symmetry considerations 
and an analysis of non-adjacent pairs imply that we do not get a stronger result by considering additional pairs.

\smallskip
For $(i,j)=(2,3)$, we have 
$a=b=(1+\rho^{\scriptscriptstyle 2})/\delta$, $c=-\rho/\delta$, so $(a-b)^{\scriptscriptstyle 2}=0<4c^{\scriptscriptstyle 2}$, i.e., we are in the ``single interval'' case of Theorem~13, with
\[
\textstyle\gamma \in \mathcal{I}^{\scriptscriptstyle 2,3} := 
(\gamma_{\scriptscriptstyle L}^{\scriptscriptstyle 2,3},\gamma_{\scriptscriptstyle R}^{\scriptscriptstyle 2,3}):=
\left( \frac{1-|\rho|+\rho^{\scriptscriptstyle 2}}{1-\rho^{\scriptscriptstyle 2}},\ \frac{1+|\rho|+\rho^{\scriptscriptstyle 2}}{1-\rho^{\scriptscriptstyle 2}} \right).
\]

\smallskip
For $(i,j)=(1,2)$, we have $a=1/\delta$, $b=(1+\rho^{\scriptscriptstyle 2})/\delta$, $c=-\rho/\delta$. A short computation gives
$
\textstyle(a-b)^{\scriptscriptstyle 2}-4c^{\scriptscriptstyle 2}=\frac{\rho^{\scriptscriptstyle 2}(\rho^{\scriptscriptstyle 2}-4)}{(1-\rho^{\scriptscriptstyle 2})^{\scriptscriptstyle 2}}<0 \quad \text{for all } \rho\in(-1,1),
$
so we are again in the ``single interval'' case of Theorem~13, with
\[
\textstyle\gamma \in \mathcal{I}^{\scriptscriptstyle 1,2} :=
(\gamma_{\scriptscriptstyle L}^{\scriptscriptstyle 1,2},\gamma_{\scriptscriptstyle R}^{\scriptscriptstyle 1,2}):=
\left( \frac{(2+\rho^{\scriptscriptstyle 2})-|\rho|\sqrt{\rho^{\scriptscriptstyle 2}+4}}{2(1-\rho^{\scriptscriptstyle 2})},\ \frac{(2+\rho^{\scriptscriptstyle 2})+|\rho|\sqrt{\rho^{\scriptscriptstyle 2}+4}}{2(1-\rho^{\scriptscriptstyle 2})} \right).
\]

\noindent Claim: $ \mathcal{I}^{\scriptscriptstyle 2,3} \cup \mathcal{I}^{\scriptscriptstyle 1,2}$ is a single interval, from which the main result follows.

\noindent Proof of claim:
It suffices to establish that
$
\gamma_{\scriptscriptstyle L}^{\scriptscriptstyle 1,2} < \gamma_{\scriptscriptstyle L}^{\scriptscriptstyle 2,3} <
\gamma_{\scriptscriptstyle R}^{\scriptscriptstyle 1,2} < \gamma_{\scriptscriptstyle R}^{\scriptscriptstyle 2,3}\,,
$
which, putting over a common denominator and simplifying, is 
equivalent to $-\sqrt{\rho^{\scriptscriptstyle 2}+4} < -2 +|\rho| < \sqrt{\rho^{\scriptscriptstyle 2}+4} < 2 +|\rho|$, which is easily verified.
\qed \end{proof}

\begin{remark}
We could also consider AR($k$) correlation matrices $C$ of order $n\geq k+1$, for all $k\geq 1$, in which case $C^{-1}$ is $(2k+1)$-diagonal. It can be checked that
such $C^{-1}$ have at most $k(k+1)$ distinct order-2 determinants corresponding to non-diagonal submatrices (the only ones that can contribute non-empty intervals); and attaining that only when $n\geq 3k+1$. For such $C$, Theorem \ref{thm:badgamma} yields at most $2k(k+1)$ intervals, independent of $n$. In fact, it is quite possible
that one could prove that it is always only one interval after merging, as we did for $k=1$ in Corollary \ref{cor:ar1}. 
\end{remark}

\begin{example} 
Turning now to a concrete case, we set $\rho=0.3$ and $n=10$, and then we have
that the forbidden region for 
$\gamma$ is 
$
\textstyle\left(1/\lambda_{\max}\,,1/\lambda_{\min}\right) \approx 
(0.56157,\ 1.82808).
$
Through what we learned above for general AR(1) matrices, Theorem \ref{thm:badgamma}
certifies that the Hessian $\nabla^{\scriptscriptstyle 2} f(\mathbf{e})$ is not negative semidefinite on the interval $(0.81499, 1.52747)$.
Through some intense  experimentation, we found that the 
 ``true 
forbidden region'' for this instance seems to be
$\approx(0.71457,1.67049)$.
This means that Theorem
\ref{thm:badgamma} is certifying over $74\%$  of the true forbidden interval.

Concretely, specifically for 
$\gamma=0.70$ and $\gamma=1.68$ (which are in the forbidden region but not in the true forbidden region),
we obtained a lot of numerical evidence that the Hessian is truly negative semidefinite. 
Figure \ref{fig:variousgamma}, 
reports gaps to the optimal values of \ref{MESP}, varying $s$, for the 
key values of $\gamma$ that we identified.
We found that for 
$\gamma=0.70$,
the \ref{nlpid_bound} bound is substantially better than that obtained with $\gamma=1/\lambda_{\max}$\,,
and for 
$\gamma=1.68$,
the \ref{nlpid_bound} bound is substantially better than that obtained with $\gamma=1/\lambda_{\min}\,$.
Additionally, we observe that the best bound is attained for values of $\gamma$ that lie sometimes to the left and sometimes to the right of the ``true forbidden interval''. Finally, we  computed the \ref{nlpid_bound} bound for ten values of $\gamma$ in the  interval $[1/\lambda_{\max}\,,0.70]$ (resp., $[1.68,1/\lambda_{\min}]$). We observed that the monotonic behavior of the bound, established in Lemma \ref{lem:natural_to_nlp_modified} for $(0,1/\lambda_{\max}]$ (resp., $[1/\lambda_{\min}\,,+\infty)$), persists throughout these extended intervals of convexity.
\hfill$\clubsuit$
\end{example}

\begin{figure}[!ht]%
    \centering
    \includegraphics[width=1.01\linewidth]{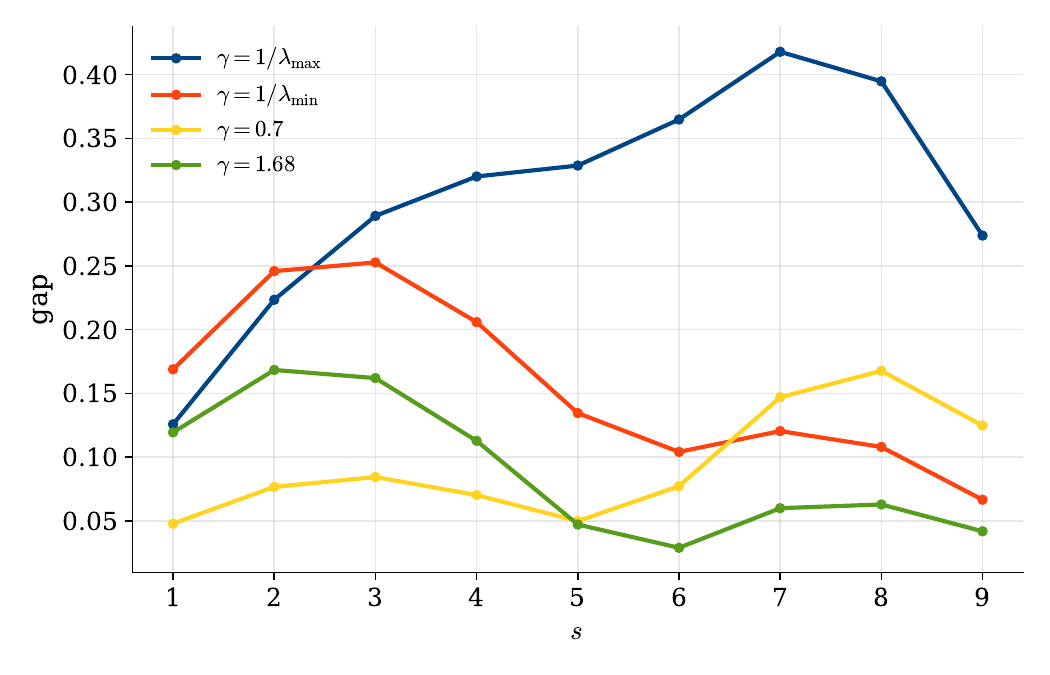} %
     \caption{Gaps for the \ref{nlpid_bound} bound}%
    \label{fig:variousgamma}%
\end{figure} 


\medskip
\noindent {\bf Acknowledgments.} 
This material is based upon work supported by the National Science Foundation under Grant No. DMS-1929284, carried out while the authors were in residence at the Institute for Computational and Experimental Research in Mathematics in Providence, RI, during the ``Maximum-Entropy Sampling: Mathematics and Algorithms'' Collaborate@ICERM program.

M.~Fampa was supported in part by CNPq grant 304340/2026-0.
J.~Lee was supported in part by AFOSR grant FA9550-22-1-0172.

\bibliographystyle{plain}
\bibliography{Bib}

\end{document}